\newtheorem{theorem}{Theorem}[section]
\newtheorem{lemma}[theorem]{Lemma}
\newtheorem{prop}[theorem]{Proposition}
\newtheorem{Co}[theorem]{Corollary}
\theoremstyle{definition}
\theoremstyle{remark}
\newtheorem{remark}[theorem]{Remark}
\numberwithin{equation}{section}
\newcommand{\CC}{\mathbb{C}}
\newcommand{\MM}{\mathbb{M}}
\newcommand{\NN}{\mathbb{N}}
\newcommand{\RR}{\mathbb{R}}
\newcommand{\bh}{B(\mathcal{H})}
\newcommand{\cl}[1]{\mathcal{#1}}
\newcommand\Ac{{\mathcal{A}}}
\newcommand\Afr{{\mathfrak A}}
\newcommand\Hc{{\mathcal{H}}}
\newcommand\hti{{\widetilde h}}
\newcommand\lspan{\mathrm{span}\,}
\newcommand\thetat{{\widetilde\theta}}
\newcommand\xt{{\widetilde x}}
\begin{document}
\title{The Delta Game}

\author[K.~Dykema]{Ken Dykema}
\address{Department of Mathematics, Texas A\& M University, College Station, TX}
\email{kdykema@math.tamu.edu}

\author[V.~I.~Paulsen]{Vern I.~Paulsen}
\address{Institute for Quantum Computing and Department of Pure Mathematics, University of Waterloo,
Waterloo, ON, Canada N2L 3G1}
\email{vpaulsen@uwaterloo.ca}

\author[J.~Prakash]{Jitendra Prakash}
\address{Institute for Quantum Computing and Department of Pure Mathematics, University of Waterloo,
Waterloo, ON, Canada N2L 3G1}
\email{jprakash@uwaterloo.ca}

\thanks{The second and third authors were supported in part by NSERC}
\keywords{finite input-output game, values of games, Connes' embedding conjecture}
\subjclass[2010]{Primary 46L05; Secondary 47L90}

\begin{abstract}
We introduce a game related to the $I_{3322}$ game and analyze a constrained value function for this game over various families of synchronous quantum probability densities.
%.present an analysis on the synchronous value of Cleve's delta game.
\end{abstract}
\date{\today (Last revised)}

\maketitle
%\tableofcontents

\section{Introduction}
A subject of a great deal of current research has been the study of various mathematical models for what should constitute the set of quantum correlations beginning with Tsirelson \cite{Ts1, Ts2} and continuing with \cite{junge2011etal, Fritz, ozawa2013, PT, dykema2015, paulsen2016, Sl1, Sl2}. In particular, the {\it Tsirelson conjectures} ask whether or not several different mathematical models for these conditional quantum probabilities yield the same sets of probability densities. Whether or not two of these models yield the same sets of densities is now known to be equivalent to {\it Connes' embedding problem} \cite{junge2011etal, Fritz, ozawa2013}.

Recently, the work of W. Slofstra \cite{Sl1, Sl2} has shown that these different models generally yield different sets. Equality of the two sets of probabilities corresponding to Connes' problem is the only unresolved case.

One way to try and distinguish between these various sets of probability densities is by studying the values of finite input-output games. Indeed, this is the approach that Slofstra uses successfully. However, his games are rather large and his results rely on some very deep results in the theory of finitely presented groups.  So it is always interesting to see if any simpler games can illuminate these differences or if in fact their potentially different quantum values all coincide.  
%\section{Preliminaries}

Another topic of current interest is attempting to compute the quantum value of the $I_{3322}$ game and decide if it is actually attained over the standard set of quantum probability densities. This has been the subject of a great deal of research \cite{Froissart1981}, \cite{Collins2004}, \cite{Pal2010}, \cite{Vidick2011}. The game that we study is a simplification of the $I_{3322}$ game.

In \cite{dykema2015} the concept of the synchronous value of a finite input-output game was introduced. In this paper we compute the synchronous quantum values, for each of these different models, for a simplification of the $I_{3322}$ game that was brought to our attention by R. Cleve that we call the $\Delta$ game.

Our results show that the synchronous values of this game corresponding to the different mathematical models of  quantum probabilities that we consider all coincide.  However, we introduce an extra constraint to the value function that allows us to compute the value at each constraint parameter. In this case the value of the game is the supremum of this constrained value function over the constraint parameter. We show that although the four values of the game are all equal, only three of the four functions coincide.  

%This calculation implies that the ``vect" model for densities gives a different set of densities than the other quantum models.  This is not a new fact, but our results show that by regarding the value of a game as a function of a constraint parameter one can see a better separation of these models. 

We show that the graphs of the three quantum value function coincide, but that they lie strictly below the function that one obtains by computing values over the set of {\it vector correlations}. This gives another way to see that the set of 3 input 2 output vector correlations is strictly larger than the corresponding sets of probabilistic quantum correlations. It was already known fact that these sets differ at the 2 input 2 output level---for example using the CHSH game. In addition, earlier work of \cite{CMNSW} used quantum chromatic numbers to show that there is a 15 input 7 output game with a perfect strategy in the set of vector correlations, but no perfect strategy among the quantum correlations.

 The main new insight from our results is that the process of studying constrained value functions of games can allow one to see separation when the unconstrained value sees no separation. 

%The reason for our interest in these vector correlations, is that Originally, Tsirelson believed that one could show that all of the various models for quantum probability densities coincided by showing that they also coincided with these vectorial correlations. 

\section{Preliminaries}
Recall that a general two person finite input-output game $\mathcal{G}$, involves two noncommunicating players, Alice ($A$) and Bob ($B$), and a Referee ($R$). The game is described by $\mathcal{G}=(I_A,I_B,O_A,O_B,\lambda)$ where $I_A,I_B,O_A,O_B$ are nonempty finite sets, representing Alice's inputs, Bob's inputs, Alice's outputs and Bob's outputs, respectively, and with $\lambda: I_A\times I_B\times O_A\times O_B \rightarrow \{0,1\}$ a function.

% with \begin{align*}
%\lambda(v,w,i,j)=\begin{cases} 0, & \text{ means that the move is not allowed,} \\ 1, & \text{ means that the move is allowed}, \end{cases}
%\end{align*} where $v\in I_A, w\in I_B, i\in O_A$ and $j\in O_B$. 
For each {\it round} of the game,
Alice receives input $v\in I_A$ and Bob receives input $w\in I_B$ from the Referee and then Alice and Bob produce outputs $i\in O_A$ and $j\in O_B$, respectively. They win if $\lambda(v,w,i,j) =1$ and lose if $\lambda(v,w,i,j) =0$. The function $\lambda$ is called the {\it rule} or {\it predicate} function. 

%Henceforth we will assume that $I=I_A=I_B=\{0,1,...,n-1\}$ and $O=O_A=O_B=\{0,1,...,m-1\}$
Suppose that
Alice and Bob have a random way to produce outputs. This is informally what is meant by a {\it strategy}. If we observe a strategy over many rounds we will obtain conditional probabilities $(p(i,j|v,w))$, where $p(i,j|v,w)$ is the joint conditional probability that Alice outputs $i$ on input $v$ and Bob outputs $j$ on input $w$. For this reason, any tuple $(p(i,j|v,w))_{i \in O_A, j \in O_B, v \in I_A, w \in I_B}$ satisfying 
\[ p(i,j|v,w) \ge 0 \, \, \text{  and } \sum_{i \in O_A, j \in O_B} p(i,j|v,w) =1, \, \forall \,\, v \in I_A, w \in I_B,\]
will be called a {\it correlation}. 

 A correlation $(p(i,j|v,w))$ is called a \emph{winning} or {\it perfect} correlation for $\cl G$ if \begin{align*}
\lambda(v,w,i,j)=0 \Rightarrow p(i,j|v,w)=0,
\end{align*} that is, it produces disallowed outputs with zero probability.

If we also assume that the Referee chooses inputs according to a known probability distribution $\pi:I_A\times I_B\to [0,1]$, that is, 
  \begin{align*}
\pi(v,w)\geq 0 \quad\text{ and } \quad \sum_{(v,w)\in I_A\times I_B}\pi(v,w)=1,
\end{align*} 
then it is possible to assign a number to each correlation that measures the probability that Alice and Bob will win a round given their correlation.
The {\it value} of the correlation $p=(p(i,j|v,w))$, corresponding to the distribution $\pi$ on inputs,  is given by \begin{align*}
V(p, \pi) = \sum_{i,j,v,w}  \lambda(v,w,i,j) \pi(v,w) p(i,j|v,w).
\end{align*} Note that a perfect correlation always has value 1 and, provided that $\pi(v,w) >0$ for all $v$ and $w$ a correlation will have value 1 if and only if it is a perfect correlation.

The {\it value of the game} $\cl G$ with respect to a fixed probability density $\pi$ on the inputs over a given set $\cl F$ of correlations is given by \begin{align*}
\omega_{\cl F}(\cl G, \pi)  = \sup \{ V(p, \pi): p \in \cl F \}.
\end{align*}
Because the set of all correlations is a bounded set in a finite dimensional vector space, whenever $\cl F$ is a closed set, it will be compact and so this supremum over $\cl F$ will be attained.

A finite input-output game as above is called {\it synchronous} provided that $I_A=I_B:=I$, $O_A=O_B:= O$ and for all $v \in I$, $\lambda(v,v, i,j) =0$ whenever $i \ne j$. This condition can be summarized as saying that whenever Alice and Bob receive the same input then they must produce the same output.  A correlation $(p(i,j|v,w))$ is called {\it synchronous} provided that $p(i,j|v,v) = 0$ for all $ v \in I_A$ and for all $i \ne j$.  Note that when $\cl G$ is a synchronous game, then any perfect correlation must be synchronous.

In this paper we are interested in studying the $\Delta$ game, which is a synchronous game, and computing $\omega(\Delta, \cl F)$ as we let $\cl F$ vary over the various mathematical models for synchronous quantum correlations.  We now introduce these various models for quantum densities.

Recall that a set, $\{R_k\}_{k=1}^n$, of operators on some Hilbert space $\mathcal{H}$ is called a \emph{positive operator valued measure} (POVM) provided $R_k\geq 0$, for each $k$, and $\sum_{k=1}^nR_k=I$. Also a set of projections, $\{P_k\}_{k=1}^n$, on some Hilbert space $\mathcal{H}$ is called a \emph{projection valued measure} (PVM) provided $\sum_{k=1}^nP_k=I$. Thus every PVM is a POVM.

A \emph{quantum} correlation for a game $\mathcal{G}$ means that Alice and Bob have finite dimensional Hilbert spaces $\mathcal{H}_A$ and $\mathcal{H}_B$, respectively. For each input $v\in I$, Alice has a PVM -- $\{P_{v,i}\}_{i\in O}$ on $\mathcal{H}_A$, and similarly for each input $w\in I$, Bob has a PVM -- $\{Q_{w,j}\}_{j\in O}$ on $\mathcal{H}_B$. They also share a state $h\in\mathcal{H}_A\otimes \mathcal{H}_B$ ($\|h\|=1$) such that \begin{align*}
p(i,j|v,w)=\left\langle (P_{v,i}\otimes Q_{w,j})h,h\right\rangle.
\end{align*} The set of all $(p(i,j|v,w))$ arising from all choices of finite dimensional Hilbert spaces $\mathcal{H}_A,\mathcal{H}_B$, all PVMs and all states $h$ is called the set of \textit{quantum correlations} denoted by $C_q(n,m)$. 

Another family of correlations are the \emph{commuting quantum} correlations. In this case there is a single (possibly infinite dimensional) Hilbert space $\mathcal{H}$ and for each input $v\in I$, Alice has a PVM  $\{P_{v,i}\}_{i\in O}$, and similarly for each input $w\in I$, Bob has a PVM  $\{Q_{w,j}\}_{j\in O}$, satisfying $P_{v,i}Q_{w,j}=Q_{w,j}P_{v,i}$ (hence the name commuting). They share a state $h\in\mathcal{H}$ ($\|h\|=1$) such that \begin{align*}
p(i,j|v,w)=\left\langle (P_{v,i}Q_{w,j})h,h\right\rangle.
\end{align*} The set of all $(p(i,j|v,w))$ arising this way is denoted by $C_{qc}(n,m)$ and is called the set of \emph{commuting quantum correlations}. 

\begin{remark} In the above definitions one could replace the PVM's with POVM's throughout, and this is used as the definitions of these sets in many references. Since there are more POVM's then PVM's one might obtain larger sets, say $\widetilde{C}_q(n,m)$ and $\widetilde{C}_{qc}(n,m)$. But, in fact,        $\widetilde{C}_q(n,m)=C_q(n,m)$ and $\widetilde{C}_{qc}(n,m)= C_{qc}(n,m)$. The fact that $\widetilde{C}_q(n,m)= C_q(n,m)$ follows by a simple dilation trick. On the Hilbert space $\cl H_A$, one simply uses a Naimark dilation to enlarge the space to $\cl K_A$ and dilate the set of POVM's to a set of PVM's on $\cl K_A$. One similarly dilates Bob's POVM's to PVM's on $\cl K_B$ and then considers the tensor products of these PVM's on $\cl K_A \otimes \cl K_B$.  The proof that $\widetilde{C}_{qc}(n,m) = C_{qc}(n,m)$ is somewhat more difficult and can be found in \cite[Proposition
3.4]{Fritz}, and also as Remark 10 of \cite{junge2011etal}. A third proof appears in \cite{PT}. We shall sometimes refer to this as the {\em disambiguation} of the two possible definitions.
\end{remark}

\begin{remark}\label{remark-q-in-qc}
By Theorem 5.3 in \cite{paulsen2016}, $C_q(n,m)\subseteq C_{qc}(n,m)$, with $(p(i,j|v,w))\in C_q(n,m)$ if and only if $(p(i,j|v,w))\in C_{qc}(n,m)$ such that the Hilbert space $\mathcal{H}$ in its realization is finite dimensional. %We will concentrate on the correlation set $C_{qc}(n,m)$ in the rest of this paper.
\end{remark}

There is yet another correlation set denoted by $C_{vect}(n,m)$ that is often called the set of \textit{vector correlations}. It is the set of all $(p(i,j|v,w))$ such that $p(i,j|v,w)=\left\langle x_{v,i},y_{w,j}\right\rangle$ for sets of vectors $\{x_{v,i}:v\in I,i\in O\}, \{y_{w,j}:w\in I,j\in O\}$ in a Hilbert space $\mathcal{H}$ and a unit vector $h\in\mathcal{H}$, which satisfy \begin{enumerate}
\item $x_{v,i}\perp x_{v,j}$ and $y_{w,i}\perp y_{w,j}$ for all $i\neq j$ in $O$.
\item $\sum_{i\in O}x_{v,i}=h=\sum_{j\in O}y_{w,j}$ for all $v,w\in I$.
\item $\left\langle x_{v,i},y_{w,j} \right\rangle\geq 0$ for all $v,w\in I$ and $i,j\in O$.
\end{enumerate} These correlations have been studied at other places in the literature, see for
example \cite{NGHA} where they are referred to as {\it almost quantum correlations} and they can be interpreted as the first level of the NPA hierarchy \cite{NPA}.

The above correlation sets are related in the following way \begin{align}\label{order1}
C_q(n,m)\subseteq C_{qc}(n,m)\subseteq C_{vect}(n,m)\subset \RR^{n^2m^2},
\end{align} for all $n,m\in\NN$ and they are all convex sets. It is known that the sets $C_{qc}(n,m)$ and $C_{vect}(n,m)$ are closed sets in $\RR^{n^2m^2}$. Set $C_{qa}(n,m)=\overline{C_{q}(n,m)}$ so that \begin{align}\label{order2}
C_q(n,m)\subseteq C_{qa}(n,m)\subseteq C_{qc}(n,m),
\end{align} for all $n,m\in \NN$. W. Slofstra \cite{Sl2} recently proved that there exists an $n$ and $m$ such that $C_q(n,m)$ is not a closed set. Hence $C_{q}(n,m)$ is in general a proper subset of $C_{qa}(n,m)$, but whether or not they are different for all values of $n,m$ is unknown. It also remains an open question to determine whether $C_{qa}(n,m)=C_{qc}(n,m)$ for all $n$ and $m$ or not.  In \cite{junge2011etal}, it was proven that if Connes' embedding problem is true then $C_{qa}(n,m)=C_{qc}(n,m)$ for all $n$ and $m$. The converse was proven in \cite{ozawa2013}. Thus we know that  $C_{qa}(n,m) = C_{qc}(n,m), \, \forall n,m$  is equivalent to Connes' embedding conjecture.

For $t\in\{q, qa, qc, vect\}$, let $C_t^s(n,m)$ denote the subset of all synchronous correlations. The synchronous sets $C_t^s(n,m)$ are also convex for $t\in\{q, qa, qc, vect\}$. The set of synchronous commuting quantum correlations, $C_{qc}^s(n,m)$, may be characterized in the following way.

Let $\mathcal{A}$ be a unital C$^*$-algebra. Recall that a linear functional $\varphi:\mathcal{A}\rightarrow\CC$ is called a \textit{tracial state} if $\varphi$ is positive, $\varphi(1)=1$, and $\tau(ab)=\tau(ba)$ for all $a,b\in\mathcal{A}$. 
%Finally, a tracial state $\tau:\cl A\to \CC$ is called \textit{faithful} if $\tau(a)>0$ whenever $a\in \cl A$ is nonzero and positive.

\begin{theorem}[Theorem 5.5, \cite{paulsen2016}]\label{qcsyn}
Let $(p(i,j|v,w))\in C_{qc}^s(n,m)$ be realized with PVMs $\{P_{v,i}:v\in I\}_{i\in O}$ and $\{Q_{w,j}:w\in I\}_{j\in O}$ in some $\bh$ satisfying $P_{v,i}Q_{w,j}=Q_{w,j}P_{v,i}$ and with some unit vector $h\in\mathcal{H}$ so that $p(i,j|v,w)=\left\langle P_{v,i}Q_{w,j} h,h \right\rangle$. 
Then \begin{enumerate}
\item $P_{v,i}h=Q_{v,i}h$ for all $v\in I, i\in O$;
\item $p(i,j|v,w)=\langle (P_{v,i}P_{w,j})h,h\rangle=\langle (Q_{w,j}Q_{v,i})h,h\rangle=p(j,i|w,v)$;
\item Let $\mathcal{A}$ be the $C^*$-algebra in $\bh$ generated by the family $\{P_{v,i}:v\in I, i\in O\}$ and define $\tau:\mathcal{A}\rightarrow\CC$ by $\tau(X)=\left\langle Xh,h\right\rangle$. Then $\tau$ is a tracial state on $\mathcal{A}$ and $p(i,j|v,w)=\tau(P_{v,i}P_{w,j})$.
\end{enumerate} Conversely, let $\mathcal{A}$ be a unital $C^*$-algebra equipped with a tracial state $\tau$ and with $\{e_{v,i}:v\in I, i\in O\}\subset \mathcal{A}$ a family of projections such that $\sum_{i\in O}e_{v,i}=1$ for all $v\in I$. Then $(p(i,j|v,w))$ defined by $p(i,j|v,w)=\tau(e_{v,i}e_{w,j})$ is an element of $C_{qc}^s(n,m)$. That is, there exists a Hilbert space $\mathcal{H}$, a unit vector $h\in \mathcal{H}$ and mutually commuting PVMs $\{P_{v,i}:v\in I\}_{i\in O}$ and $\{Q_{w,j}:w\in I\}_{j\in O}$ on $\mathcal{H}$ such that \begin{align*}
p(i,j|v,w)=\langle (P_{v,i}Q_{w,j})h,h\rangle=\langle (P_{v,i}P_{w,j})h,h\rangle=\langle (Q_{w,j}Q_{v,i})h,h\rangle
\end{align*}
\end{theorem}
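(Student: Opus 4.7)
The plan is to handle the forward direction (1)--(3) in order, using the structure $P_{v,i}h=Q_{v,i}h$ as the key workhorse, and then to handle the converse via the GNS construction.

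For (1), I would compute $\sum_{i\in O}\|P_{v,i}h-Q_{v,i}h\|^2$ and show it is zero. Expanding each term and using that $P_{v,i}$ and $Q_{v,i}$ are projections that commute, the squared norm equals $\langle P_{v,i}h,h\rangle + \langle Q_{v,i}h,h\rangle - 2\langle P_{v,i}Q_{v,i}h,h\rangle=\langle P_{v,i}h,h\rangle+\langle Q_{v,i}h,h\rangle-2p(i,i|v,v)$. Summing over $i$, the first two sums equal $1$ because $\{P_{v,i}\}$ and $\{Q_{v,i}\}$ are PVMs, and $\sum_i p(i,i|v,v)=\sum_{i,j}p(i,j|v,v)=1$ by synchronicity. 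So the total is $0$, each summand is nonnegative, and hence $P_{v,i}h=Q_{v,i}h$ for every $v,i$. For (2), using (1) I replace the $Q$'s touching $h$ by $P$'s: $\langle P_{v,i}Q_{w,j}h,h\rangle=\langle P_{v,i}P_{w,j}h,h\rangle$ via $Q_{w,j}h=P_{w,j}h$, and similarly $\langle P_{v,i}Q_{w,j}h,h\rangle=\langle Q_{w,j}P_{v,i}h,h\rangle=\langle Q_{w,j}Q_{v,i}h,h\rangle$ by commuting and applying (1). The symmetry $p(i,j|v,w)=p(j,i|w,v)$ then follows because $\langle P_{v,i}P_{w,j}h,h\rangle$ is real, so it equals its conjugate $\langle P_{w,j}P_{v,i}h,h\rangle$.

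For (3), I would first check tracial invariance on monomials in the generators $\{P_{v,i}\}$ and then extend by linearity and density/continuity, since $\mathcal A$ is generated by self-adjoints. The pivotal manipulation is to move the leftmost projection to the right: for a word $X=P_{v_1,i_1}P_{v_2,i_2}\cdots P_{v_k,i_k}$, write
\[
\tau(X)=\langle P_{v_2,i_2}\cdots P_{v_k,i_k}h,\,P_{v_1,i_1}h\rangle=\langle P_{v_2,i_2}\cdots P_{v_k,i_k}h,\,Q_{v_1,i_1}h\rangle,
\]
using (1). Then $\langle\,\cdot\,,Q_{v_1,i_1}h\rangle=\langle Q_{v_1,i_1}\,\cdot\,,h\rangle$, and since each $Q_{v_1,i_1}$ commutes with every $P_{v_l,i_l}$, I can slide $Q_{v_1,i_1}$ past the entire word to its right end, then convert it back to $P_{v_1,i_1}$ via (1). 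The result is $\tau(P_{v_2,i_2}\cdots P_{v_k,i_k}P_{v_1,i_1})$, giving the cyclic property, which is exactly the trace condition.

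For the converse, I would invoke the GNS construction applied to $(\mathcal A,\tau)$ to produce a Hilbert space $\mathcal H_\tau$ with cyclic unit vector $h=\hat 1$ and a representation $\pi\colon\mathcal A\to B(\mathcal H_\tau)$ satisfying $\tau(a)=\langle\pi(a)h,h\rangle$. Because $\tau$ is tracial, right multiplication yields a second, commuting representation $\pi'\colon\mathcal A\to B(\mathcal H_\tau)$ with $\pi'(b)\hat a=\widehat{ab^*}$; standard tracial GNS theory gives that $\pi(\mathcal A)$ and $\pi'(\mathcal A)$ commute. Setting $P_{v,i}=\pi(e_{v,i})$ and $Q_{w,j}=\pi'(e_{w,j})$ yields two commuting PVMs, and $\tau(e_{v,i}e_{w,j})=\langle\pi(e_{v,i})\pi'(e_{w,j})h,h\rangle$ (using that $e_{w,j}$ is self-adjoint so $\pi'(e_{w,j})h=\hat e_{w,j}$), placing $(p(i,j|v,w))$ in $C_{qc}^s(n,m)$. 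Synchronicity follows since $\tau(e_{v,i}e_{v,j})=\tau(e_{v,i}e_{v,j}e_{v,i})=\|\pi(e_{v,j})\pi(e_{v,i})h\|^2$, which vanishes when $i\ne j$ because $e_{v,i}e_{v,j}=0$.

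The main obstacle I expect is the trace verification in (3): the sliding argument must carefully use both the commutativity of $P$'s with $Q$'s and the identity $P_{v,i}h=Q_{v,i}h$ twice (once to convert $P$ to $Q$, once back), and one must justify extending cyclic invariance from monomials to the full $C^*$-algebra. Everything else---(1), (2), and the converse---is a direct application of the standard Naimark/GNS machinery once the key identity of (1) is in hand.
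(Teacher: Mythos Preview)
The paper does not prove this theorem; it is quoted verbatim as Theorem~5.5 of \cite{paulsen2016} and used as a black box. So there is no ``paper's own proof'' to compare against. That said, your argument is correct and is essentially the standard proof from \cite{paulsen2016}: the vanishing-sum-of-squares trick for (1), the substitution $Q_{w,j}h=P_{w,j}h$ for (2), the ``slide $P$ to $Q$, commute past the word, slide back'' argument for the trace in (3), and the left/right GNS representations for the converse are exactly the ingredients used there. Your concern about extending cyclic invariance from monomials to all of $\mathcal{A}$ is not a real obstacle: the $P_{v,i}$ are self-adjoint, so finite linear combinations of words in them form a dense $*$-subalgebra, and $\tau$ is a state, hence norm-continuous, so the identity $\tau(XY)=\tau(YX)$ passes to the closure by bilinearity and continuity.
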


% When $\mathcal{A}=\MM_n$, there is a unique tracial state denoted by $\tau_n$ which is the usual normalized trace. That is, if $X=[x_{kl}]\in\MM_n$, we define $\tau_n(X)=\frac{1}{n}\tr(X)=\frac{1}{n}\sum_{k=1}^nx_{kk}.$ If $\mathcal{A}$ is a finite dimensional $C^*$-algebra, then we have that $\mathcal{A}=\MM_{n_1}\oplus ... \oplus \MM_{n_k},$ for some $n_1,...,n_k\in\NN$. In this case every tracial state $\tau$ on $\mathcal{A}$ has the form \begin{align*}
% \tau(X_1\oplus ... \oplus X_k)=t_1\tau_{n_1}(X_1)+...+t_k\tau_{n_k}(X_{k}),
% \end{align*} where $t_j\geq 0$ such that $t_1+...+t_k=1$(called the {\it weights}), and $\tau_{n_j}$ is the normalized trace on $\MM_{n_j}$ for each $1\leq j\leq k$. 
% These observations and Remark \ref{remark-q-in-qc} lead to the following characterization of $C_q^s(n,m)$:
% 
% \begin{prop} We have that $(p(i,j|v,w)) \in C_q^s(n,m)$ if and only if there exist $k \in \bb N$, $\, n_{\ell} \in \bb N, 1 \le \ell \le k$, weights $t_1,...,t_k$, projections $P_{v,i,\ell} \in \bb M_{n_{\ell}}, 1 \le v \le n, 1 \le i \le m, \, 1 \le \ell \le k$ satisfying $\sum_i P_{v,i,\ell} = I_{n_{\ell}}$ such that
% \[ p(i,j|v,w) = \sum_{\ell=1}^k t_{\ell} \tau_{n_{\ell}}(P_{v,i,\ell}P_{w,j, \ell}).\]
% \end{prop}

This theorem and Remark \ref{remark-q-in-qc} lead to the following characterization of $C_q^s(n,m)$:

\begin{prop}
We have that $(p(i,j|v,w)) \in C_q^s(n,m)$ if and only if there exists a finite dimensional C$^*$-algebra $\mathcal{A}$
with a tracial state $\tau$ and with a family of projections
$\{e_{v,i}:v\in I, i\in O\}\subset \mathcal{A}$ such that $\sum_{i\in O}e_{v,i}=1$ for all $v\in I$ and $p(i,j|v,w)=\tau(e_{v,i}e_{w,j})$ for all $i,j,v,w$.
\end{prop}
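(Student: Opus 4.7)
The plan is to deduce the proposition directly from Theorem \ref{qcsyn} together with Remark \ref{remark-q-in-qc}, which together already provide everything needed; the only real content is to match the finite dimensionality of the underlying Hilbert space with the finite dimensionality of the C$^*$-algebra on the other side.

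For the forward direction, suppose $(p(i,j|v,w)) \in C_q^s(n,m)$. By Remark \ref{remark-q-in-qc}, this density lies in $C_{qc}^s(n,m)$ with a realization on a \emph{finite} dimensional Hilbert space $\cl H$. Applying part (3) of Theorem \ref{qcsyn}, we obtain the C$^*$-subalgebra $\cl A \subseteq B(\cl H)$ generated by Alice's PVMs and the tracial state $\tau(X) = \langle Xh, h \rangle$, with the desired formula $p(i,j|v,w) = \tau(P_{v,i}P_{w,j})$. Since $\cl H$ is finite dimensional, $B(\cl H)$ is finite dimensional, hence so is its subalgebra $\cl A$. The projections $e_{v,i} := P_{v,i}$ then satisfy all the required properties.

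For the converse, suppose we are given a finite dimensional C$^*$-algebra $\cl A$ with a tracial state $\tau$ and projections $\{e_{v,i}\}$ with $\sum_i e_{v,i} = 1$ for each $v$. The converse part of Theorem \ref{qcsyn} already produces a Hilbert space $\cl H$, a unit vector $h \in \cl H$, and commuting PVMs realizing $(p(i,j|v,w))$ as an element of $C_{qc}^s(n,m)$. The point I need to verify is that the Hilbert space produced by this construction is finite dimensional, so that Remark \ref{remark-q-in-qc} places the correlation in $C_q^s(n,m)$. This is immediate once one inspects the construction: the standard recipe is the GNS construction applied to $(\cl A, \tau)$, giving $\cl H = L^2(\cl A, \tau)$ with $h$ the image of $1$, on which $\cl A$ acts by left multiplication (playing the role of Alice's $P_{v,i}$) and by right multiplication (playing the role of Bob's $Q_{w,j}$); these two actions commute, and $\dim \cl H \le \dim \cl A < \infty$ because $\cl A$ is finite dimensional.

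The main (and only) potential obstacle is being sure that the converse direction of Theorem \ref{qcsyn} as stated preserves finite dimensionality of the Hilbert space when $\cl A$ is finite dimensional. Since the theorem is quoted from \cite{paulsen2016} without explicitly recording this, the cleanest way to proceed is either to cite that the GNS construction of a finite dimensional unital $*$-algebra yields a finite dimensional Hilbert space, or to briefly reproduce the construction $\cl H := \cl A$ with inner product $\langle a, b \rangle := \tau(b^* a)$, $h := 1$, $P_{v,i} a := e_{v,i} a$, $Q_{w,j} a := a e_{w,j}$, and verify (trace property for the inner product to be positive semidefinite, self-adjointness of $Q_{w,j}$ using that $e_{w,j}$ is a projection and $\tau$ is tracial, commutation of the two actions, and the identity $\langle P_{v,i} Q_{w,j} h, h\rangle = \tau(e_{v,i} e_{w,j})$). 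Once this is in place, the proposition follows.
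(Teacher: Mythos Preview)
Your proposal is correct and follows exactly the approach the paper indicates: the paper does not give a separate proof but simply states that ``This theorem and Remark~\ref{remark-q-in-qc} lead to the following characterization,'' leaving the details implicit. You have filled in precisely those details, including the observation that the GNS Hilbert space of a finite dimensional C$^*$-algebra is finite dimensional, which is the one point not made explicit in the cited results.
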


The set of synchronous vector correlations is described in the next proposition. 

\begin{prop}\label{vectsyn}
We have $(p(i,j|v,w))\in C_{vect}^s(n,m)$ if and only if \begin{align*}
p(i,j|v,w)=\langle x_{v,i},x_{w,j}\rangle
\end{align*} for a set of vectors $\{x_{v,i}:v\in I,i\in O\}\subset \mathcal{H}$ with $x_{v,i}\perp x_{v,j}$ when $i\neq j$, $\sum_{i=1}^{m}x_{v,i}=h$ for some unit vector $h\in\mathcal{H}$, and $\langle x_{v,i}, x_{w,j}\rangle\geq 0$.
\end{prop}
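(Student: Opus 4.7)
The plan is to prove the two directions separately; the backward direction is routine, and the forward direction reduces, via a Cauchy--Schwarz equality argument, to showing that the synchronicity assumption forces the two vector families in the standard definition of $C_{vect}(n,m)$ to coincide.

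For the backward direction, assume vectors $\{x_{v,i}\}$ and a unit vector $h$ as in the statement. Setting $y_{w,j}:=x_{w,j}$, all three conditions (1)--(3) in the definition of $C_{vect}(n,m)$ are automatic, so $(p(i,j|v,w))\in C_{vect}(n,m)$. Synchronicity follows from $p(i,j|v,v)=\langle x_{v,i},x_{v,j}\rangle=0$ when $i\neq j$, by condition (1).

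For the forward direction, start with an arbitrary realization of $(p(i,j|v,w))\in C_{vect}^s(n,m)$: vector families $\{x_{v,i}\}$ and $\{y_{w,j}\}$ in some Hilbert space $\mathcal{H}$ and a unit vector $h\in\mathcal{H}$ satisfying (1)--(3), with $p(i,j|v,w)=\langle x_{v,i},y_{w,j}\rangle$. The synchronicity condition means $\langle x_{v,i},y_{v,j}\rangle = 0$ whenever $i\neq j$. I would then compute $\langle x_{v,i},h\rangle$ in two ways using condition (2):
\begin{align*}
\langle x_{v,i},h\rangle \;=\; \Bigl\langle x_{v,i},\sum_{j}y_{v,j}\Bigr\rangle \;=\; \langle x_{v,i},y_{v,i}\rangle,
\end{align*}
where the last equality uses synchronicity, and
\begin{align*}
\langle x_{v,i},h\rangle \;=\; \Bigl\langle x_{v,i},\sum_{j}x_{v,j}\Bigr\rangle \;=\; \|x_{v,i}\|^2,
\end{align*}
where the last equality uses the orthogonality in condition (1). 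A symmetric computation with $\langle h,y_{v,i}\rangle$ yields $\langle x_{v,i},y_{v,i}\rangle=\|y_{v,i}\|^2$. Hence $\|x_{v,i}\|=\|y_{v,i}\|$ and $\langle x_{v,i},y_{v,i}\rangle=\|x_{v,i}\|\,\|y_{v,i}\|$, which is the equality case of the Cauchy--Schwarz inequality; this forces $x_{v,i}=y_{v,i}$ (with both sides zero allowed as a degenerate case).

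Finally, defining $z_{v,i}:=x_{v,i}=y_{v,i}$ gives a single family, for which $p(i,j|v,w)=\langle z_{v,i},z_{w,j}\rangle$ and properties (1)--(3) are inherited directly. The only step that requires any thought is the Cauchy--Schwarz equality maneuver and its degenerate case, and beyond that the argument is purely bookkeeping with the defining axioms of $C_{vect}(n,m)$.
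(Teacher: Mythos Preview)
Your proof is correct. The paper states this proposition without proof, so there is no argument in the paper to compare against directly; your Cauchy--Schwarz equality maneuver is the natural one and mirrors the operator-algebraic version (Theorem~\ref{qcsyn}(1), where one shows $P_{v,i}h=Q_{v,i}h$) that the paper quotes from \cite{paulsen2016}.

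One small remark for completeness: when you invoke the equality case of Cauchy--Schwarz to conclude $x_{v,i}=y_{v,i}$, you are implicitly using that $\langle x_{v,i},y_{v,i}\rangle$ is real and nonnegative (so that the proportionality constant must be $1$ rather than a unimodular scalar). This is guaranteed by condition~(3) in the definition of $C_{vect}(n,m)$, and it would not hurt to say so explicitly.
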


The synchronous subsets satisfy inclusions as in expression \ref{order2},
\begin{equation*}%\label{eq:synchincl}
C_q^s(n,m)\subseteq C_{qa}^s(n,m)\subseteq C_{qc}^s(n,m)\subseteq C_{vect}^s(n,m)\subseteq \RR^{n^2m^2},
\end{equation*}
and since $C_{qa}(n,m)$, $C_{qc}(n,m)$, and $C_{vect}(n,m)$ are closed sets it is easy to see that their synchronous subsets are also closed. We can also ask the synchronous analogues of the questions described before. It is easy to see that, $C_t(n,m) = C_{t^{\prime}}(n,m) \implies C_t^s(n,m) = C_{t^{\prime}}^s(n,m)$,
but there is no a priori reason that the converses should hold. % $\overline{C_{q}(n,m)}=C_{qa}(n,m)$, does this also happen at the synchronous level? 
It is shown in \cite{dykema2015} that $\overline{C_{q}^s(n,m)}=C_{qc}^s(n,m)$ for all $n,m\in\NN$ is equivalent to Connes' embedding conjecture.  In \cite{KPS, Fritz} it is shown that $\overline{C_q^s(n,m)} = C_{qa}^s(n,m)$.

The questions described above can be formulated in terms of values of games. If we restrict $\omega_{\cl F}(\cl G,\pi)$ to the synchronous subset $\cl F^s$ of $\cl F$, we obtain the \textit{synchronous value} of the game $\cl G$ given the probability density $\pi$ defined by \begin{align*}
\omega_{\cl F}^s(\cl G,\pi) = \sup\{V(p,\pi): p\in \cl F^s\}.
\end{align*} As before we write this as $\omega_t^s(\cl G, \pi)$ when $\cl F= C_t(n,m)$. The following proposition relates  the synchronous values of a game to Connes' embedding conjecture.

\begin{prop}[Proposition 4.1, \cite{dykema2015}]
If Connes' embedding conjecture is true then $\omega_q(\mathcal{G},\pi)=\omega_{qc}(\mathcal{G},\pi)$ and $\omega_q^s(\mathcal{G},\pi)=\omega_{qa}^s(\mathcal{G},\pi)=\omega_{qc}^s(\mathcal{G},\pi)$ hold for every game $\mathcal{G}$ and every distribution $\pi$.
\end{prop}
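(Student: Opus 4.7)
The plan is to reduce the statement to two elementary ingredients: continuity of the value functional $p \mapsto V(p,\pi)$, and the set-theoretic equalities among correlation sets that are available under Connes' embedding conjecture. Throughout, the ambient space $\mathbb{R}^{n^2m^2}$ is finite dimensional, so linear functionals are automatically continuous and sup over a bounded set equals sup over its closure.

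First I would handle the unconstrained equality $\omega_q(\cl G,\pi) = \omega_{qc}(\cl G,\pi)$. Since $V(\cdot,\pi)$ is linear in its first argument and $C_q(n,m)$ is bounded, a standard compactness/continuity argument gives
\begin{equation*}
\omega_q(\cl G,\pi) \;=\; \sup\{V(p,\pi) : p \in C_q(n,m)\} \;=\; \sup\{V(p,\pi) : p \in \overline{C_q(n,m)}\} \;=\; \omega_{qa}(\cl G,\pi).
\end{equation*}
By the chain \eqref{order2}, we have $\omega_{qa}(\cl G,\pi) \le \omega_{qc}(\cl G,\pi)$. Assuming Connes' embedding conjecture, the result of \cite{junge2011etal} gives $C_{qa}(n,m) = C_{qc}(n,m)$, so the reverse inequality also holds and the first equality follows.

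Next I would treat the synchronous case $\omega_q^s(\cl G,\pi)=\omega_{qa}^s(\cl G,\pi)=\omega_{qc}^s(\cl G,\pi)$. The same continuity argument, applied to $C_q^s(n,m)$, gives
\begin{equation*}
\omega_q^s(\cl G,\pi) \;=\; \sup\{V(p,\pi) : p \in \overline{C_q^s(n,m)}\}.
\end{equation*}
The result of \cite{KPS, Fritz} (cited above) says $\overline{C_q^s(n,m)} = C_{qa}^s(n,m)$ unconditionally, so $\omega_q^s(\cl G,\pi) = \omega_{qa}^s(\cl G,\pi)$. For the remaining equality with $\omega_{qc}^s(\cl G,\pi)$, I would invoke the equivalence recalled in the excerpt: $\overline{C_q^s(n,m)} = C_{qc}^s(n,m)$ for all $n,m$ is equivalent to Connes' embedding conjecture. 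Assuming the conjecture, this forces $C_{qa}^s(n,m) = C_{qc}^s(n,m)$ (both coincide with $\overline{C_q^s(n,m)}$), and hence the two sup's agree.

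There is no real obstacle here; the proof is essentially a packaging of the definitions together with the two cited equivalences. The only point worth being careful about is not confusing $\omega_q$ with $\omega_{qa}$: the move $\omega_q = \omega_{qa}$ is unconditional in the nonsynchronous case (since sup over a set equals sup over its closure), but the move $\omega_{qa} = \omega_{qc}$ (and, in the synchronous setting, the move $\omega_{qa}^s = \omega_{qc}^s$) genuinely requires Connes' conjecture.
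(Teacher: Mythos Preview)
The paper does not supply its own proof of this proposition; it is simply quoted from \cite{dykema2015}. Your argument is correct and is the natural one: it uses only that $V(\cdot,\pi)$ is a linear (hence continuous) functional on a finite-dimensional space, so that the supremum over $C_q(n,m)$ equals the supremum over its closure $C_{qa}(n,m)$, together with the set equalities $C_{qa}(n,m)=C_{qc}(n,m)$ and $\overline{C_q^s(n,m)}=C_{qc}^s(n,m)$ that hold under Connes' embedding conjecture. One minor remark: the unconditional equality $\overline{C_q^s(n,m)}=C_{qa}^s(n,m)$ from \cite{KPS,Fritz} that you invoke postdates \cite{dykema2015}, so the original proof there could not have used it; but for the purposes of this paper your use of it is entirely appropriate, and in any case the equality $\omega_q^s=\omega_{qa}^s$ already follows from the same sup-over-closure argument once one knows $C_{qa}^s(n,m)$ is closed and contains $C_q^s(n,m)$ densely.
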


\begin{remark} It is not known if the converse of any of these above implications is true.  That is, for example, if $\omega_q(\cl G, \pi) = \omega_{qc}(\cl G, \pi), \forall \cl G, \, \forall \pi$, then must Connes' embedding conjecture be true?
\end{remark}

We now introduce the $\Delta$ game.

\section{The $\Delta$ Game}
\label{delta-section}

The $\Delta$ game is a nonlocal game with three inputs and two outputs. We have $I=\{0,1,2\}$  as the input set and $O=\{0,1\}$ as the output set (thus $n=3,m=2$). Out of the 36 possible tuples $(v,w,i,j)$, allowed rules $(v,w,i,j)\in I\times I\times O\times O$ are \begin{align*}
(0,0,0,0),\quad(0,1,0,1),\quad(1,1,0,0),\quad(1,2,0,1),\quad(2,2,0,0),\quad(2,0,0,1), \\
(0,0,1,1),\quad(0,1,1,0),\quad(1,1,1,1),\quad(1,2,1,0),\quad(2,2,1,1),\quad(2,0,1,0),
\end{align*} whereas the disallowed rules are \begin{align*}
(0,0,0,1),\quad(0,1,0,0),\quad(1,1,0,1),\quad(1,2,0,0),\quad(2,2,0,1),\quad(2,0,0,0),\\
(0,0,1,0),\quad(0,1,1,1),\quad(1,1,1,0),\quad(1,2,1,1),\quad(2,2,1,0),\quad(2,0,1,1).
\end{align*} The remaining 12 tuples $(v,w,i,j)$ are all allowed. 

The first 12 allowed rules may be visualized as in Figure \ref{figure1}. The allowed edges $(0,0), (1,1), (2,2)$ are shown with dashed lines while $(0,1), (1,2), (2,0)$ are shown with solid lines. The dashed lines are \textit{even} while the solid lines are \textit{odd}. This means that if Alice and Bob are given inputs joined by dashed lines then they return outputs with even sum; and in the other case they return outputs with odd sum. 

\begin{figure}[h]
\centering
\begin{tikzpicture}
\node at (-1,-0.5) {Alice};
\node at (1,-0.5) {Bob};
\node[circle,fill=black,inner sep=1pt,minimum size=1pt]  at (-1,-1) {};
\node[circle,fill=black,inner sep=1pt,minimum size=1pt]  at (1,-1) {};
\node at (-1.25,-1) {\Large 0};
\node at (1.25,-1) {\Large 0};
\node[circle,fill=black,inner sep=1pt,minimum size=1pt]  at (-1,-2) {};
\node[circle,fill=black,inner sep=1pt,minimum size=1pt]  at (1,-2) {};
\node at (-1.25,-2) {\Large 1};
\node at (1.25,-2) {\Large 1};
\node[circle,fill=black,inner sep=1pt,minimum size=1pt]  at (-1,-3) {};
\node[circle,fill=black,inner sep=1pt,minimum size=1pt]  at (1,-3) {};
\node at (-1.25,-3) {\Large 2};
\node at (1.25,-3) {\Large 2};
\draw[-,thick,dashed] (-1,-1) -- (1,-1);
\draw[-,thick,dashed] (-1,-2) -- (1,-2);
\draw[-,thick,dashed] (-1,-3) -- (1,-3);
\draw[-,thick] (-1,-1) -- (1,-2);
\draw[-,thick] (-1,-2) -- (1,-3);
\draw[-,thick] (-1,-3) -- (1,-1);
\end{tikzpicture}
\caption{$\Delta$ game rule function.}
\label{figure1}
\end{figure}
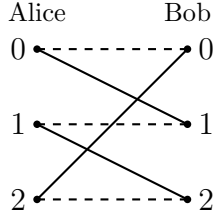

Alice and Bob receive inputs according to the uniform distribution $\pi=(\pi(v,w))$ on the set of inputs \begin{align*}
E=\{(0,0), (1,1), (2,2), (0,1), (1,2), (2,0)\},
\end{align*} that is, $\pi(v,w)=\frac{1}{6}$ for all $(v,w)\in E$ (and zero otherwise). To compute the synchronous value of the game given the distribution $\pi$ we first compute the value of a single correlation $p=(p(i,j|v,w))$, which is, \begin{align*}
V(p,\pi) = \frac{1}{6}\left( \sum_{v=0}^2\sum_{i=0}^1p(i,i|v,v)+p(i,i+1|v,v+1) \right),
\end{align*} so that the value of the game becomes, \begin{align*}
\omega_t^s(\mathcal{G},\pi)=\sup\left\lbrace \frac{1}{6}\left( \sum_{v=0}^2\sum_{i=0}^1 p(i,i|v,v)+p(i,i+1|v,v+1) \right):p(i,j|v,w)\in C_t^s(3,2)\right\rbrace,
\end{align*} where $t\in\{q, qa, qc, vect\}$. Denote the expression inside the braces by,
\begin{align*}
\widetilde{\theta}= \frac{1}{6}\left( \sum_{v=0}^2\sum_{i=0}^1 p(i,i|v,v)+p(i,i+1|v,v+1) \right).
\end{align*} We will use Theorem \ref{qcsyn} and Proposition \ref{vectsyn} to simplify $\widetilde{\theta}$ and to obtain expressions involving operators and vectors in the case of $t=qc$ and $t=vect$, respectively. Moreover, when $t=q$, by Remark \ref{remark-q-in-qc} it suffices to proceed as in the case $t=qc$ using Theorem \ref{qcsyn} to simplify $\widetilde{\theta}$, but restricting to the case of operators on finite dimensional Hilbert spaces.

%\marginpar{We need letters other than $\theta$ and $\tilde{\theta}$ throughout. It is too confusing to %be have $\theta$ denote the value of the game and a variable!!!}

We first handle the $t=qc$ case. By Theorem \ref{qcsyn}, a correlation $(p(i,j|v,w))$ is in $C_{qc}^s(3,2)$ if and only if there exists a C$^*$-algebra $\mathcal{A}$ of $\bh$ generated by a family of projections $\{A_{v,i}:i=0,1 \text{ and }v=0,1,2\}$ satisfying $A_{v,0}+A_{v,1}=I_{\mathcal{H}}$ for $v\in\{0,1,2\}$ and a tracial state $\tau:\mathcal{A}\rightarrow\CC$ such that $p(i,j|v,w)=\tau(A_{v,i}A_{w,j})=\left\langle (A_{v,i}A_{w,j})h,h\right\rangle$, for some unit vector $h\in\mathcal{H}$. For notational convenience we define \begin{align*}
A_0=A_{0,0}, \quad\quad A_1=A_{1,0}, \quad\quad A_2=A_{2,0}.
\end{align*} Then $A_{v,1}=I_{\mathcal{H}}-A_v=I_{\mathcal{H}}-A_{v,0}$ for $v\in\{0,1,2\}$. Using this we can rewrite $\widetilde{\theta}$ as \begin{align*}
\widetilde{\theta} &= \frac{1}{6}\sum_{v=0}^2\sum_{i=0}^1 p(i,i|v,v)+p(i,i+1|v,v+1) \\
&= \frac{1}{6}\sum_{v=0}^2\sum_{i=0}^1 \tau(A_{v,i}A_{v,i})+\tau(A_{v,i}A_{v+1,i+1}) \\
&= \frac{1}{2}+\frac{1}{3}\tau(A_0+A_1+A_2)-\frac{1}{3}\sum_{v=0}^2\tau(A_vA_{v+1}).
\end{align*} We now define a ``parameter'' $\theta$ by setting
\begin{align*}
\theta &= \frac{1}{3}\tau(A_0+A_1+A_2),
\end{align*} which enables us to write $\widetilde{\theta}$ as \begin{align}\label{eq-in-qc-case}
\widetilde{\theta} = \frac{1}{2}+\frac{1}{3}\tau(A_0+A_1+A_2)-\frac{1}{3}\sum_{v=0}^2\tau(A_vA_{v+1}) 
= \frac{1}{2}+\theta-\frac{1}{3}\sum_{v=0}^2\tau(A_vA_{v+1}).
\end{align}

Similarly, in the $t=vect$ case, using Proposition \ref{vectsyn} and proceeding as in the previous paragraph,
writing $x_i$ for $x_{i,0}$,
we see that $\widetilde{\theta}$ is given by \begin{align*}
\widetilde{\theta} = \frac{1}{2} + \frac{1}{3}\left\langle x_0+x_1+x_2,h \right\rangle -\frac{1}{3} \sum_{v=0}^2\langle x_v,x_{v+1}\rangle, 
\end{align*} for some set of vectors $\{x_0,x_1,x_2,h\}$ in some Hilbert space $\cl H$
satisfying $\|h\|=1$ and, for all $v$ and $w$,
\[
x_v\perp(h-x_v),\qquad\langle x_v,x_w\rangle\ge0,\qquad\langle x_v,h-x_w\rangle\ge0,\qquad\langle h-x_v,h-x_w\rangle\ge0.
\]
Again letting $\theta = \frac{1}{3}\left\langle x_0+x_1+x_2,h \right\rangle$, we may write
\begin{align}\label{eq-in-vect-case}
\widetilde{\theta}= \frac{1}{2}+\theta-\frac{1}{3} \sum_{v=0}^2\langle x_v,x_{v+1}\rangle.
\end{align}

For each $t\in\{q, qa, qc, vect\}$, let $\Theta_t^s$ denote the set of all points $(\theta,\widetilde{\theta})\in\RR^2$ that can be obtained from correlations $(p(i,j|v,w))\in C_t^s(n,m)$ in the manner described above. We want to see how $\Theta_t^s$ behaves under different values of $t$.
It is easy to verify that $\Theta_t^s$ is a convex set since it is the affine image of the convex set $C_t^s(n,m)$. To find $\Theta_t^s$, it is enough to compute the following two functions for each $\theta$, \begin{align*}
f_t^u(\theta) = \sup\{\widetilde{\theta}:(\theta,\widetilde{\theta})\in\Theta_t^s\}, \qquad f_t^l(\theta) = \inf\{\widetilde{\theta}:(\theta,\widetilde{\theta})\in\Theta_t^s\},
\end{align*} where \textit{u} and \textit{l} stand for \textit{upper} and \textit{lower}, respectively. We also need to determine if the supremum and the infimum are attained or not. Notice that in the $qc$ case, in order to find the supremum (resp., infimum) of $\widetilde{\theta}=\frac{1}{2}+\theta-\frac{1}{3}\sum_{v=0}^2\tau(A_vA_{v+1}),$ we need to find the infimum (resp., supremum) of the quantity $\sum_{v=0}^2\tau(A_vA_{v+1})$. A similar statement holds for the $vect$ case.

In the $qc$ case, notice that since $A_v$'s are projections and $\tau$ is a state we get, $0\leq  \frac{1}{3} \tau(A_0+A_1+A_2) \leq 1.$ Similarly in the $vect$ case, by the Cauchy-Schwarz inequality we get $0\leq \frac{1}{3}\langle x_0+x_1+x_2,h\rangle\leq 1$. Hence $0\leq \theta\leq 1$. Conversely, if $\theta\in [0,1]$, then we can always find projections $A_0,A_1,A_2$ in some C$^*$-algebra with a tracial state $\tau$, such that $\frac{1}{3}\tau(A_0+A_1+A_2)=\theta$.

%\begin{Lem}\label{proj-for-t}
%Let $t\in [0,1]$, then there exists a projection $P$ in some finite dimensional $C^*$-algebra $\mathcal{A}$ such that $\tau(P)=t$ for some tracial state $\tau$ on $\mathcal{A}$.
%\end{Lem}
%\begin{proof}
%First suppose that $t$ is rational with $t=\frac{p}{q}$ ($0<p<q$). Consider the $q\times q$ matrix $P=\begin{bmatrix}
%I_p & 0 \\ 0 & 0_{q-p}
%\end{bmatrix}$, then $\tau(P)=\frac{p}{q}=t$. Next let $0<t<1$ be irrational. Let $\mathcal{A}=\MM_n\oplus \MM_n$ and $P=I_n\oplus 0$. Define a tracial state on $\mathcal{A}$ by $\tau(X_1\oplus X_2)=t\tau(X_1)+(1-t)\tau(X_2)$. Then clearly $\tau(P)=t$.
%\end{proof} 

It is evident that $\Theta_q^s \subseteq \Theta_{qa}^s \subseteq \Theta_{qc}^s \subseteq \Theta_{vect}^s$.

\begin{theorem}\label{mainth}
For $t \in \{ q, qa, qc \}$, we have
\begin{equation}\label{eq:flutmain}
f_t^l(\theta) = \frac{1}{2}, \qquad f_t^u(\theta) = \begin{cases} 
\hfill \frac{1}{2}+\theta \hfill & \text{ for } 0\leq \theta \leq \frac{1}{3} \\
\hfill \frac{3+\theta}{4} \hfill & \text{ for } \frac{1}{3}\leq \theta\leq \frac{1}{2} \\
\hfill \frac{4-\theta}{4} \hfill & \text{ for } \frac{1}{2} \leq \theta \leq \frac{2}{3} \\
\hfill \frac{3}{2}-\theta \hfill & \text{ for } \frac{2}{3}\leq \theta \leq 1.
\end{cases}
\end{equation}
Moreover, we have
\begin{equation}\label{eq:fluvectmain}
f_{vect}^l(\theta) = \frac{1}{2}, \qquad f_{vect}^u(\theta) = \begin{cases} 
\hfill \frac{1}{2}+\theta \hfill & \text{ for } 0\leq \theta \leq \frac{1}{3} \\
\hfill \frac{1+3\theta-3\theta^2}{2} \hfill & \text{ for } \frac{1}{3}\leq \theta \leq \frac{2}{3} \\
\hfill \frac{3}{2}-\theta \hfill & \text{ for } \frac{2}{3}\leq \theta \leq 1.
\end{cases}
\end{equation}
In all of the these cases,  the infimum and supremum are attained by both $f_t^u$ and $f_t^l$.
Since $(\theta,\widetilde{\theta})\in\Theta_t^s$ if and only if $0\le\theta\le1$ and $f_t^l(\theta)\leq \widetilde{\theta}\leq f_t^u(\theta)$,
we see that $\Theta_t^s$ is a closed set in $\RR^2$ for each $t\in \{q,qa,qc,vect\}$.
In particular, we have 
\begin{equation}\label{eq:Thetaincl}
\Theta_q^s = \Theta_{qa}^s = \Theta_{qc}^s \subsetneq \Theta_{vect}^s.
\end{equation}
\end{theorem}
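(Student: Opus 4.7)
I would prove Theorem \ref{mainth} by analyzing the boundary functions $f^l_t$ and $f^u_t$ of the convex set $\Theta_t^s$ for each $t$, using the parametrizations in (\ref{eq-in-qc-case}) and (\ref{eq-in-vect-case}); the $q$ and $qa$ cases will follow from the $qc$ analysis once the extremal configurations are seen to live in finite-dimensional algebras. The lower bound $f^l_t(\theta) = \tfrac{1}{2}$ is easiest: the tracial Cauchy-Schwarz inequality $\tau(A_v A_{v+1})^2 \le \tau(A_v^2)\tau(A_{v+1}^2) = a_v a_{v+1}$ together with AM-GM gives $\tau(A_v A_{v+1}) \le \tfrac{1}{2}(a_v + a_{v+1})$, and summing cyclically over $v$ yields $\widetilde\theta \ge \tfrac{1}{2}$ (the vector case is analogous). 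Equality is realized for any $\theta \in [0,1]$ by the abelian algebra $\CC \oplus \CC$ with the tracial state $\tau(a,b) = \theta a + (1-\theta)b$ and $A_0 = A_1 = A_2 = (1,0)$; this correlation already lies in $C_q^s(3,2)$, hence in all four $C_t^s$.

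For the upper bounds I would split on $\theta$. For $\theta \in [0,1/3]$, positivity $\tau(A_v A_{v+1}) \ge 0$ (respectively, $\langle x_v, x_{v+1}\rangle \ge 0$ by hypothesis) gives $\widetilde\theta \le \tfrac{1}{2} + \theta$, attained by three mutually orthogonal rank-$1$ projections of trace $\theta$ in a finite-dimensional matrix algebra. By the involution $A_v \mapsto I - A_v$ (respectively, $x_v \mapsto h - x_v$), which swaps $\theta \leftrightarrow 1-\theta$ while leaving $\widetilde\theta$ invariant, the case $\theta \in [2/3,1]$ yields $\widetilde\theta \le \tfrac{3}{2} - \theta$ via the complementary configuration; equivalently this follows from $\tau(A_v A_{v+1}) \ge a_v + a_{v+1} - 1$ (from $\tau((I-A_v)(I-A_{v+1})) \ge 0$) summed cyclically. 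In both of these boundary regimes the $qc$ and $vect$ formulas coincide.

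The main work is in the middle range $\theta \in [1/3,2/3]$, and by the involution it suffices to treat $\theta \in [1/3,1/2]$. For $vect$, writing $x_v = t_v h + y_v$ with $y_v \perp h$ forces $\|y_v\|^2 = t_v(1-t_v)$, and then $\sum_v \langle x_v, x_{v+1}\rangle = \sum_v t_v t_{v+1} + \sum_v \langle y_v, y_{v+1}\rangle$; the elementary bound $\sum_v \langle y_v, y_{v+1}\rangle \ge -\tfrac{1}{2}\sum_v \|y_v\|^2$ (equivalent to $\|y_0+y_1+y_2\|^2 \ge 0$) combined with cyclic symmetrization to $t_v = \theta$ gives $\widetilde\theta \le (1 + 3\theta - 3\theta^2)/2$, attained by $y_0,y_1,y_2$ coplanar at $120^\circ$ of common length $\sqrt{\theta(1-\theta)}$. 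For the $qc$ case, achievability of $(3+\theta)/4$ comes from a direct-sum construction in $M_3(\CC) \oplus M_2(\CC)$ that interpolates the two extremal ``corner'' configurations: three orthogonal rank-$1$ projections on the $M_3$ summand (attaining $\theta = 1/3$) and three ``Mercedes'' projections $\tfrac{1}{2}(I + \hat n_v \cdot \vec\sigma)$ with $\sum_v \hat n_v = 0$ on the $M_2$ summand (attaining $\theta = 1/2$), together with a tracial state that is an appropriate convex combination of the two normalized matrix traces. The main obstacle is the matching lower bound
\[
\sum_v \tau(A_v A_{v+1}) \;\ge\; \tfrac{3(3\theta-1)}{4}, \qquad \theta \in [1/3, 1/2],
\]
which is strictly stronger than the Cauchy-Schwarz inequality $\tau((\sum_v A_v)^2) \ge (\sum_v \tau(A_v))^2$ (that only yields the weaker $vect$-side value $\tfrac{3\theta(3\theta-1)}{2}$). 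The projection identity $A_v^2 = A_v$ must be used; after symmetrizing by cyclic and $S_3$-averaging to reduce to $\tau(A_v) = \theta$ and $\tau(A_v A_w) = c$ for all $v \ne w$, my plan is to perform a spectral analysis of $S = A_0 + A_1 + A_2$ as a sum of three projections, exploiting that $S$ can be a scalar multiple of $I$ (which is what saturates Cauchy-Schwarz) only at the discrete values $3\theta \in \{1, 3/2, 2\}$ compatible with three commuting or Mercedes-type projection spectra.

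For the conclusions, closedness of each $\Theta_t^s$ follows from continuity and attainment of $f^l_t, f^u_t$; since every extremal configuration constructed above lies in a finite-dimensional $C^*$-algebra it realizes a correlation in $C_q^s(3,2)$, and because $\Theta_{qc}^s$ is then the convex polygon with vertices $(0,1/2),(1/3,5/6),(1/2,7/8),(2/3,5/6),(1,1/2)$---all attained in $C_q^s$---convexity gives $\Theta_q^s = \Theta_{qa}^s = \Theta_{qc}^s$. Strictness of $\Theta_{qc}^s \subsetneq \Theta_{vect}^s$ follows from the elementary inequality $(3+\theta)/4 < (1 + 3\theta - 3\theta^2)/2$ on $(1/3,1/2) \cup (1/2,2/3)$, which rearranges to $(2\theta - 1)(3\theta - 1) < 0$ on each subinterval.
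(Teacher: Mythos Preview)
Your treatment of $f^l_t$ and of the outer ranges $\theta\in[0,\tfrac13]\cup[\tfrac23,1]$ is fine and essentially matches the paper, and your argument for $f^u_{vect}$ on $[\tfrac13,\tfrac23]$ is correct and genuinely different: the paper instead writes down the $7\times7$ Gramian of $h,x_0,x_1,x_2,h-x_0,h-x_1,h-x_2$, does one Cholesky step, and reads off the positivity range for $\beta$. Your decomposition $x_v=\theta h+y_v$ together with $\|y_0+y_1+y_2\|^2\ge0$ is shorter and just as rigorous once one checks (as you should) that the Mercedes configuration also satisfies the remaining sign constraints $\langle x_v,h-x_w\rangle\ge0$ and $\langle h-x_v,h-x_w\rangle\ge0$ on $[\tfrac13,\tfrac23]$; both hold because the latter equals $\tfrac12(3\theta-2)(\theta-1)\ge0$ there.

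The genuine gap is the $qc$ upper bound on $(\tfrac13,\tfrac23)$. You correctly identify that you must prove $\sum_v\tau(A_vA_{v+1})\ge\tfrac34(3\theta-1)$ for $\theta\in[\tfrac13,\tfrac12]$, and you note that Cauchy--Schwarz on $S=A_0+A_1+A_2$ only gives the weaker $vect$ bound. But your ``plan'' to exploit the discreteness of the scalar values of $S$ is not a proof: the infimum could a priori be approached through projections for which $S$ is far from scalar, and you give no mechanism to extract a quantitative inequality from the spectral constraints on a sum of three projections. The paper's argument here is the heart of the theorem and is quite different from anything you sketch. First, a variational/derivative computation shows that if the infimum of $\tfrac13\tau(AB+BC+CA)$ over projections of trace $\theta$ is attained at $(A_0,B_0,C_0)$, then conjugating $B_0,C_0$ by $e^{itH}$ and differentiating forces $[A_0,B_0+C_0]=0$, and by symmetry all three commutators $[A_0,B_0+C_0]$, $[B_0,A_0+C_0]$, $[C_0,A_0+B_0]$ vanish. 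Second, the universal unital C$^*$-algebra generated by three projections subject to exactly these relations is computed explicitly: it is $\CC^8\oplus\MM_2$, with the $\MM_2$ summand carrying precisely your Mercedes triple. Third, minimizing $\beta$ over tracial states on this nine-dimensional algebra with $\tau(A)=\tau(B)=\tau(C)=\theta$ is a finite linear program whose solution is the piecewise-linear $\beta_0$ giving the stated $f^u_t$. This also makes the equality $\Theta_q^s=\Theta_{qc}^s$ immediate, since the extremizers live in a fixed finite-dimensional algebra.

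A minor slip: your final factorization is only $(2\theta-1)(3\theta-1)<0$ on $(\tfrac13,\tfrac12)$; on $(\tfrac12,\tfrac23)$ the correct rearrangement of $(4-\theta)/4<(1+3\theta-3\theta^2)/2$ is $(2\theta-1)(3\theta-2)<0$.
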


The functions as obtained in Theorem \ref{mainth} are shown in Figure~\ref{fig:fplots}.
%\marginpar{The graph disappears when I tex the file. This means that standard tex packages can not produce your picture--this will be another problem when we submit}
\iffalse \begin{figure}[h]
\centering
\begin{tikzpicture}[xscale=8,yscale=8]
\draw[->] (0,0.45) -- (1.2,0.45) node[right] {$\theta$};
\draw[->] (0,0.45) -- (0,1.2) node[above] {$\widetilde{\theta}$};
\draw[domain=0:(1/3),smooth,variable=\x,blue] plot ({\x},{(1/2)+\x});
\draw[domain=(1/3):(1/2),smooth,variable=\x,blue] plot ({\x},{(3+\x)/4});
\draw[domain=(1/2):(2/3),smooth,variable=\x,blue] plot ({\x},{(4-\x)/4});
\draw[domain=(2/3):1,smooth,variable=\x,blue] plot ({\x},{(3/2)-\x});
\draw[domain=0:1,smooth,variable=\x,blue] plot ({\x},{1/2});

\draw[domain=0:(1/3),smooth,variable=\y,red] plot ({\y},{(1/2)+\y});
\draw[domain=(1/3):(2/3),smooth,variable=\y,red] plot ({\y},{(1+3*\y-3*\y*\y)/2});
\draw[domain=(2/3):1,smooth,variable=\y,red] plot ({\y},{(3/2)-\y});
\end{tikzpicture}
\caption{Plots of $f^l_t=f^l_{vect}$, $f^u_t$ and $f^u_{vect}$ from Theorem~\ref{mainth}}
\label{fig:fplots}
\end{figure}\fi
% Note that the final proper containment in~\eqref{eq:Thetaincl}
% provides a proof that $C_{qc}(3,2)$ is a proper subset of $C_{vect}(3,2)$, thus providing another
% counterexample to the strong Tsirelson conjecture.

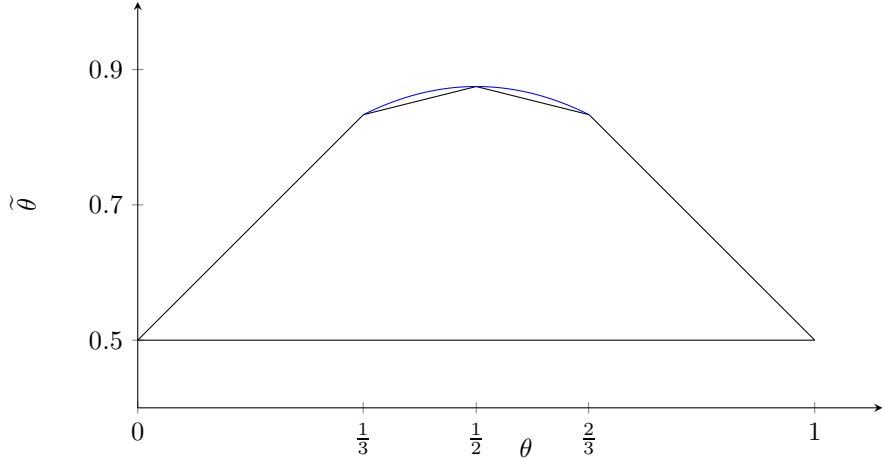
\begin{figure}[h]
\centering
\begin{tikzpicture}
\begin{axis}[
	x=9cm, y=9cm,
    axis lines = left,
	xlabel style={right},
	ylabel style={above},
	xmin=0, xmax=1.1,
    xlabel = $\theta$,
    xtick={0,0.333,0.5,0.666,1},
    xticklabels={0,$\frac{1}{3}$,$\frac{1}{2}$,$\frac{2}{3}$,1},
	ymin=0.4, ymax=1,    
    ylabel = {$\widetilde{\theta}$},
    ytick={0.5,0.7,0.9},
    yticklabels={0.5,0.7,0.9},
]

\addplot [
    domain=0:1/3, 
    samples=100, 
]
{(1/2)+x};

\addplot[
	domain=1/3:1/2,
	samples=100,
]
{(3+x)/4};

\addplot[
	domain=1/2:2/3,
	samples=100,
]
{(4-x)/4};

\addplot[
	domain=2/3:1,
	samples=100,
]
{(3/2)-x};

\addplot[
	color=blue,
	domain=1/3:2/3,
	samples=100,
]
{(1+3*x-3*x^2)/2};

\addplot[
	domain=0:1,
	samples=100,
]
{1/2}; 
\end{axis}
\end{tikzpicture}
\caption{Plots of $f^l_t=f^l_{vect}$, $f^u_t$ and $f^u_{vect}$ from Theorem~\ref{mainth}}
\label{fig:fplots}
\end{figure}

The fact that the functions $f^u_{vect}$ and $f^u_{qc}$ are different allows us to deduce the following.

\begin{Co} We have that $C^s_{qc}(3,2) \subsetneq C^s_{vect}(3,2)$.
\end{Co}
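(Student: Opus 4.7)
The plan is to leverage Theorem~\ref{mainth} directly, using the fact that the assignment $p \mapsto (\theta(p),\widetilde{\theta}(p))$ described in Section~\ref{delta-section} is an affine map from $\RR^{n^2 m^2}$ to $\RR^2$ that sends each $C_t^s(3,2)$ onto $\Theta_t^s$. Since any function respects equality of its inputs, it suffices to observe that if $C_{qc}^s(3,2)$ were equal to $C_{vect}^s(3,2)$ then their images $\Theta_{qc}^s$ and $\Theta_{vect}^s$ would coincide, contradicting the strict inclusion $\Theta_{qc}^s \subsetneq \Theta_{vect}^s$ recorded in~\eqref{eq:Thetaincl}.

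More concretely, the first step is to locate a separating pair $(\theta_0,\widetilde{\theta}_0) \in \Theta_{vect}^s \setminus \Theta_{qc}^s$. Any $\theta_0 \in (1/3, 2/3) \setminus \{1/2\}$ will do, since on that range the formulas for $f_{qc}^u$ and $f_{vect}^u$ in Theorem~\ref{mainth} agree only at $\theta = 1/3, 1/2, 2/3$. For instance, at $\theta_0 = 2/5$ one gets
\[
f_{qc}^u(2/5) = \frac{3 + 2/5}{4} = \frac{17}{20} < \frac{43}{50} = \frac{1 + 6/5 - 12/25}{2} = f_{vect}^u(2/5),
\]
so the point $(\theta_0,\widetilde{\theta}_0) := (2/5, 43/50)$ lies in $\Theta_{vect}^s$ but not in $\Theta_{qc}^s$.

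The second step is to pull this point back to a correlation. The ``supremum attained'' clause in Theorem~\ref{mainth} guarantees the existence of a correlation $p \in C_{vect}^s(3,2)$ with $\theta(p) = 2/5$ and $\widetilde{\theta}(p) = f_{vect}^u(2/5) = 43/50$. Were this $p$ also an element of $C_{qc}^s(3,2)$, the pair $(\theta(p),\widetilde{\theta}(p))$ would lie in $\Theta_{qc}^s$ by definition, forcing $\widetilde{\theta}(p) \le f_{qc}^u(2/5) = 17/20$, which is absurd. Thus $p \in C_{vect}^s(3,2) \setminus C_{qc}^s(3,2)$, and combined with the known inclusion $C_{qc}^s(3,2) \subseteq C_{vect}^s(3,2)$ this yields the strict inclusion stated.

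There is no genuine obstacle here: all the substance is in Theorem~\ref{mainth} itself, whose proof establishes both the explicit formula for $f_{vect}^u$ strictly above $f_{qc}^u$ and the attainability of the supremum in the vector case. Once those two ingredients are in hand, the corollary is a one-line transfer through the affine image map.
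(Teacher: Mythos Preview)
Your proof is correct and follows the same route the paper implicitly takes: the paper states the corollary immediately after Theorem~\ref{mainth} with only the remark that $f^u_{vect}$ and $f^u_{qc}$ differ, and you have simply spelled out that inference in detail, including a concrete separating point and the appeal to attainment of the supremum.
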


\begin{remark}  There is another larger set of correlations that we could have considered, the {\em nonsignalling correlations}. For a definition, see \cite{LR}. If we let  $C^s_{ns}(n,k)$ denote the set of synchronous nonsignalling correlations, then it is shown in \cite{LR} that the set $C^s_{ns}(n,2)$ is a polytope. If we let $f^u_{ns}$ denote the analogous function obtained by taking the supremum over the set of synchronous nonsignalling correlations, then the fact that the set of such correlations is a polytope implies that $f^u_{ns}$ would be piecewise linear. Hence,  $f^u_{vect} \ne f^u_{ns}$ and we can conclude that $C^s_{vect}(3,2) \subsetneq C^s_{ns}(3,2)$.
\end{remark}

\section{The case of  $t=vect$.}

In this section, we compute $f^l_{vect}$ and $f^u_{vect}$ to prove~\eqref{eq:fluvectmain} in Theorem~\ref{mainth}.
We will employ the symmetrization provided by the next lemma.

\begin{lemma}\label{lem:symm}
$\Theta_{vect}^s$ is equal to the set of pairs $(\theta,\thetat)$  with $0\le \theta\le 1$,
such that there exist vectors $x_0,x_1,x_2,h$
in a Hilbert space with the properties:
\begin{itemize}\renewcommand{\labelitemi}{$\bullet$}
\item $\|h\|=1$,
\item $\forall v$ $\langle x_v,h\rangle=\langle x_v,x_v\rangle=\theta$,
\item $\forall v$ $\langle x_v,x_{v+1}\rangle=\beta$, where $\thetat=\frac12+\theta-\beta$ and $2\theta-1\le\beta\le\theta$.
\end{itemize}
\end{lemma}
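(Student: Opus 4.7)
The plan is to exploit the cyclic symmetry of the $\Delta$ game by averaging an arbitrary realization over the three cyclic rotations of $\{0,1,2\}$, producing an equivalent realization in which all three $x_v$ have the same inner product with $h$ and with themselves, and in which the three consecutive inner products $\langle x_v,x_{v+1}\rangle$ all coincide. The bounds $2\theta-1\le\beta\le\theta$ will then fall out of the defining nonnegativity conditions for $C_{vect}^s(3,2)$.

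Given $(\theta,\thetat)\in\Theta_{vect}^s$ realized by vectors $x_0,x_1,x_2,h\in\mathcal{H}$ as in Proposition~\ref{vectsyn}, I would pass to $\mathcal{H}':=\mathcal{H}\oplus\mathcal{H}\oplus\mathcal{H}$ and set
\[
h' = \tfrac{1}{\sqrt 3}(h,h,h), \qquad x_v' = \tfrac{1}{\sqrt 3}(x_v,\,x_{v+1},\,x_{v+2}),
\]
with indices read mod $3$. A direct computation gives $\|h'\|=1$ and $\langle x_v',h'\rangle = \frac13\langle x_0+x_1+x_2,h\rangle = \theta$ independently of $v$. Using the original orthogonality $x_v\perp(h-x_v)$---equivalently $\|x_v\|^2=\langle x_v,h\rangle$---the same averaging collapses $\|x_v'\|^2$ to $\theta$ for every $v$. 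Expanding and reindexing cyclically, $\langle x_v',x_{v+1}'\rangle$ reduces to the common value $\beta:=\frac13\sum_{w=0}^{2}\langle x_w,x_{w+1}\rangle$ independent of $v$, whence $\thetat=\frac12+\theta-\beta$. All vector correlation conditions transfer to the new realization, because each inner product among $\{x_v',\,h'-x_v'\}$ is a nonnegative-coefficient average of the corresponding original inner products among $\{x_w,\,h-x_w\}$.

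For the bounds on $\beta$, $\langle x_v',\,h'-x_{v+1}'\rangle = \theta-\beta\ge 0$ yields $\beta\le\theta$, and $\langle h'-x_v',\,h'-x_{v+1}'\rangle = 1-2\theta+\beta\ge 0$ yields $\beta\ge 2\theta-1$. For the converse direction, any vectors satisfying the hypotheses of the lemma produce a valid element of $C_{vect}^s(3,2)$ by setting $x_{v,0}=x_v$ and $x_{v,1}=h-x_v$: the orthogonality $x_{v,0}\perp x_{v,1}$ is the identity $\langle x_v,h\rangle=\|x_v\|^2$, one has $x_{v,0}+x_{v,1}=h$, and nonnegativity of the twelve pairwise inner products among $\{x_{v,i}\}$ is a direct consequence of the stated bounds on $\beta$. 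Computing $\theta$ and $\thetat$ from this correlation recovers the given pair, closing the equivalence.

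The only real subtlety is the use of the identity $\|x_v\|^2=\langle x_v,h\rangle$ at the critical moment: it is precisely this that forces $\|x_v'\|^2$ to collapse to $\theta$ rather than to an uncontrolled average of squared norms, and hence that allows $\langle x_v',x_v'\rangle$ and $\langle x_v',h'\rangle$ to take the same symmetric value. Once this is in hand, the rest of the proof is a routine verification of the defining properties in Proposition~\ref{vectsyn}.
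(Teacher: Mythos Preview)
Your proof is correct and follows exactly the paper's approach: the same cyclic averaging in $\mathcal{H}^{\oplus 3}$ via $h'=\tfrac1{\sqrt3}(h,h,h)$ and $x_v'=\tfrac1{\sqrt3}(x_v,x_{v+1},x_{v+2})$. You simply fill in more of the verification than the paper does---in particular the explicit derivation of the bounds $2\theta-1\le\beta\le\theta$ from the nonnegativity constraints, and the converse direction---both of which the paper leaves implicit.
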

\begin{proof}
By Proposition 2.4, and the discussion in Section~\ref{delta-section},
$\Theta_{vect}^s$ is the set of pairs $(\theta,\thetat)$ such that there exist vectors $x_0,x_1,x_2,h$ in a Hilbert space $\Hc$
with the properties that $\|h\|=1$,
for all $v$ and $w$, we have
\[
\langle x_v,h\rangle=\langle x_v,x_v\rangle,\quad\langle x_v,x_w\rangle\ge0,\quad\langle x_v,h-x_w\rangle\ge0,\quad\langle h-x_v,h-x_w\rangle\ge0
\]
and, moreover,
\[
\frac13\sum_{v=0}^2\langle x_v,h\rangle=\theta,\qquad
\frac13\sum_{v=0}^2\langle x_v,x_{v+1}\rangle=\beta,
\]
where $\thetat=\frac12+\theta-\beta$.
The conditions appearing in the lemma are precisely these, but with the additional requirement that the quantities $\langle x_v,h\rangle$
and $\langle x_v,x_{v+1}\rangle$ are the same for all $v\in\{0,1,2\}$. 
However, given $x_0,x_1,x_2,h$ satisfying these weaker conditions and considering
\[
\hti=\frac1{\sqrt3}(h\oplus h\oplus h),\qquad \xt_v=\frac1{\sqrt3}(x_v\oplus x_{v+1}\oplus x_{v+2})
\]
in the Hilbert space $\Hc^{\oplus3}$,
we see that $\xt_0,\xt_1,\xt_2,\hti$ satisfy the stronger conditions and yield the same pair $(\theta,\thetat)$.
\end{proof}

We now prove the part of Theorem \ref{mainth} involving the case $t=vect$.

\begin{theorem}\label{mainthm-vect}
The functions
\begin{equation}\label{eq:fluvectdef}
f_{vect}^l(\theta)= \inf\{\widetilde{\theta}:(\theta,\widetilde{\theta})\in\Theta_{vect}^s\},\qquad
f_{vect}^u(\theta)=\sup\{\widetilde{\theta}:(\theta,\widetilde{\theta})\in\Theta_{vect}^s\}
\end{equation}
are given by
\begin{equation}\label{eq:fluvect}
f_{vect}^l(\theta) = \frac{1}{2}, \qquad f_{vect}^u(\theta) = \begin{cases} 
\hfill \frac{1}{2}+\theta \hfill & \text{ for } 0\leq \theta \leq \frac{1}{3} \\
\hfill \frac{1+3\theta-3\theta^2}{2} \hfill & \text{ for } \frac{1}{3}\leq \theta \leq \frac{2}{3} \\
\hfill \frac{3}{2}-\theta \hfill & \text{ for } \frac{2}{3}\leq \theta \leq 1.
\end{cases}
\end{equation}
Moreover, both the infimum and supremum are attained, for all values of $\theta\in[0,1]$.
\end{theorem}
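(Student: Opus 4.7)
The plan is to apply Lemma~\ref{lem:symm} to reduce the problem to determining, for each $\theta\in[0,1]$, the set of admissible values of $\beta$. Since $\widetilde\theta=\tfrac12+\theta-\beta$, computing $f_{vect}^u(\theta)$ and $f_{vect}^l(\theta)$ amounts respectively to finding the minimum and maximum admissible $\beta$, subject to the positivity constraints inherited from Proposition~\ref{vectsyn}.

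The existence of vectors $x_0,x_1,x_2,h$ with the symmetric inner-product values prescribed by Lemma~\ref{lem:symm} is equivalent to positive semidefiniteness of the $4\times4$ Gram matrix
\[
G=\begin{pmatrix}1&\theta&\theta&\theta\\ \theta&\theta&\beta&\beta\\ \theta&\beta&\theta&\beta\\ \theta&\beta&\beta&\theta\end{pmatrix},
\]
together with the pointwise positivity constraints from Proposition~\ref{vectsyn} which, for $v\ne w$, translate into $\beta\ge0$, $\beta\le\theta$ and $\beta\ge2\theta-1$ (from $\langle x_v,x_{v+1}\rangle\ge0$, $\langle x_v,h-x_{v+1}\rangle\ge0$ and $\langle h-x_v,h-x_{v+1}\rangle\ge0$, respectively). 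To analyze when $G\succeq0$ I will exploit the cyclic $\ZZ/3$ symmetry permuting the last three coordinates: this block-diagonalizes $G$ into a $2$-dimensional invariant subspace spanned by $e_1$ and $\tfrac{1}{\sqrt3}(e_2+e_3+e_4)$, on which the restriction is the $2\times2$ matrix with diagonal entries $1,\,\theta+2\beta$ and off-diagonal entries $\sqrt3\,\theta$, and a $2$-dimensional orthogonal complement on which $G$ acts as multiplication by $\theta-\beta$. Hence $G\succeq0$ iff $\beta\le\theta$ and $\theta+2\beta\ge3\theta^2$, i.e.\ $\beta\ge\tfrac{3\theta^2-\theta}{2}$, so altogether
\[
\max\!\Bigl(0,\;2\theta-1,\;\tfrac{3\theta^2-\theta}{2}\Bigr)\le\beta\le\theta.
\]

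A pairwise comparison of the three lower bounds shows that the binding one is $0$ on $[0,\tfrac13]$, $\tfrac{3\theta^2-\theta}{2}$ on $[\tfrac13,\tfrac23]$ and $2\theta-1$ on $[\tfrac23,1]$; the transition at $\theta=\tfrac23$ follows from the factorization $\tfrac{3\theta^2-\theta}{2}-(2\theta-1)=\tfrac12(3\theta-2)(\theta-1)$. Substituting the minimum admissible $\beta$ into $\widetilde\theta=\tfrac12+\theta-\beta$ yields the piecewise expression for $f_{vect}^u$ claimed in~\eqref{eq:fluvect}, and taking $\beta=\theta$ (which one checks satisfies all three lower-bound constraints for every $\theta\in[0,1]$) gives $f_{vect}^l(\theta)=\tfrac12$. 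Attainment of both extrema at every $\theta\in[0,1]$ is automatic, since the admissible set for $\beta$ is closed: at each boundary value $G$ is positive semidefinite and vectors $x_0,x_1,x_2,h$ realizing that value are obtained from any factorization $G=V^*V$. The only real bookkeeping is the three-way comparison of lower bounds on each sub-interval; the $\ZZ/3$-symmetry reduction makes the semidefiniteness analysis of the $4\times4$ matrix itself routine, so I do not anticipate a substantive technical obstacle.
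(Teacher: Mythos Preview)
Your argument is correct and follows the same overall strategy as the paper: invoke Lemma~\ref{lem:symm} to symmetrize, translate existence of the vectors into positive semidefiniteness of a Gram matrix together with the three scalar inequalities $\beta\ge0$, $\beta\le\theta$, $\beta\ge2\theta-1$, and then determine the admissible range of $\beta$.

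The difference is purely in the linear-algebra step. The paper works with the $7\times7$ Gram matrix of $h,x_0,x_1,x_2,y_0,y_1,y_2$ (with $y_v=h-x_v$), applies one step of Cholesky to eliminate $h$, observes the resulting $6\times6$ matrix has block form $\left(\begin{smallmatrix}A&-A\\-A&A\end{smallmatrix}\right)$, and then analyzes the $3\times3$ block $A$. You instead note that the $y_v$ are redundant (being linear combinations of $h$ and the $x_v$), work directly with the $4\times4$ Gram matrix of $h,x_0,x_1,x_2$, and exploit the cyclic $\ZZ/3$ symmetry to split it into a $2\times2$ block and the scalar $\theta-\beta$ with multiplicity two. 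Both routes yield the same constraint $\frac{3\theta^2-\theta}{2}\le\beta\le\theta$, and the subsequent case analysis is identical. Your reduction is a bit more economical; the paper's has the minor expository advantage that the nonnegativity constraints on $\langle x_v,y_w\rangle$ and $\langle y_v,y_w\rangle$ are visibly the entries of the larger Gram matrix rather than being imposed separately.
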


\begin{proof}
Fix $\theta\in[0,1]$.
By Lemma~\ref{lem:symm}, we are interested in the set of $\beta$ such that there exist vectors $x_0,x_1,x_2,h$ in some Hilbert space
satisfying the conditions listed there.
Let $y_v=h-x_v$.
Consider the Gramian matrix $G$ associated with the
seven vectors $h,x_0,x_1,x_2,y_0,y_1,y_2$.
The conditions of Lemma~\ref{lem:symm} imply that this is the $7\times 7$ matrix
\[
G= \begin{bmatrix} 1& \theta& \theta& \theta & 1-\theta & 1-\theta& 1-\theta\\
\theta& \theta & \beta & \beta & 0 & \theta-\beta & \theta-\beta \\
\theta& \beta & \theta & \beta & \theta-\beta & 0 & \theta-\beta \\
\theta & \beta & \beta & \theta & \theta-\beta & \theta-\beta & 0 \\
1-\theta & 0 & \theta-\beta & \theta-\beta & 1-\theta & 1+\beta -2\theta & 1+\beta -2\theta\\
1-\theta & \theta-\beta & 0 & \theta-\beta & 1+\beta -2\theta & 1-\theta & 1+\beta -2\theta\\
1-\theta & \theta-\beta & \theta-\beta & 0 & 1+\beta -2\theta & 1+\beta -2\theta & 1-\theta \end{bmatrix}
\]
and furthermore, that $G$ is positive semidefinite and 
\begin{equation}\label{eq:thetabeta}
\max(0,2\theta-1)\le\beta\le\theta.
\end{equation}
Conversely, given any such  $7\times 7$ positive semidefinite matrix
and with the additional condition~\eqref{eq:thetabeta}, 
we can construct seven such vectors in a Hilbert space.
Thus, we are interested in the set of $\beta$ that satisfy~\eqref{eq:thetabeta} and yield a positive semidefinite matrix $G$ given above.

We apply one step of the Cholesky algorithm, and conclude that the $7 \times 7$ matrix $G$ is positive semidefinite if and only if the following $6 \times 6$ matrix $G'$ is positive semidefinite:
\[
G'=\begin{bmatrix}  \theta-\theta^2 & \beta -\theta^2 & \beta-\theta^2 & \theta^2 -\theta & \theta^2 -\beta & \theta^2 -\beta \\
\beta- \theta^2 & \theta - \theta^2 & \beta-\theta^2 & \theta^2 -\beta & \theta^2 -\theta & \theta^2 -\beta\\
\beta-\theta^2 & \beta - \theta^2 & \theta -\theta^2 & \theta^2 -\beta & \theta^2 -\beta & \theta^2 -\theta\\
\theta^2 -\theta & \theta^2 -\beta  & \theta^2 -\beta & \theta-\theta^2 & \beta -\theta^2 & \beta -\theta^2\\
\theta^2 -\beta & \theta- \theta^2 & \theta^2 -\beta & \beta-\theta^2 & \theta-\theta^2 & \beta -\theta^2 \\
\theta^2 -\beta & \theta^2 -\beta & \theta^2 -\theta & \beta -\theta^2 & \beta -\theta^2 & \theta -\theta^2 \end{bmatrix}.
\]
This matrix $G'$ partitions into a block matrix of the form 
$\begin{bmatrix} A & -A\\ -A & A \end{bmatrix},$ where \begin{align*}
A = \begin{bmatrix} a & x & x\\x & a & x\\ x & x & a \end{bmatrix},
\end{align*} with $a= \theta - \theta^2$ and $x = \beta - \theta^2$.
Thus the matrix $G'$ is positive semi-definite if and only if $A\geq 0$.
Using the determinant criteria we see that $A \ge 0$ if and only if $|x| \le a$ and $2x^3 - 3ax^2 + a^3 \ge 0$.
Simplifying we see that $A\geq 0$ if and only if $-\frac{a}{2} \le x \le a$.
Substituting the values of $a$ and $x$, we find that the Gramian matrix $G$ is positive semidefinite if and only if
\[
\frac{ 3 \theta^2 -\theta}{2} \le \beta \le \theta.
\]
Thus, the set of all possible $\beta$ is the set satisfying
\[
\max \left\lbrace \frac{3\theta^2 -\theta}{2}, 2\theta-1, 0 \right\rbrace \le \beta \le \theta.
\]
This becomes
\[
\begin{aligned}
\hfill 0 \leq \beta \leq \theta &\quad\text{for }\textstyle0\leq \theta \leq \frac{1}{3} \\
\textstyle\hfill \frac{3\theta^2-\theta}{2} \leq \beta \leq \theta &\quad\text{for }\textstyle\frac{1}{3} \leq \theta \leq \frac{2}{3} \\
\hfill 2\theta-1 \leq \beta \leq \theta&\quad\text{for }\textstyle\frac{2}{3} \leq \theta \leq 1.
\end{aligned}
\]
Thus, we obtain the values~\eqref{eq:fluvect} and we have that the infimum and supremum in~\eqref{eq:fluvectdef} are attained.
\end{proof}

\section{The cases $t\in\{q,qa,qc\}$.}

In this section, we compute $f^l_t$ and $f^u_t$ when $t\in\{q,qa,qc\}$ to prove~\eqref{eq:flutmain} in Theorem~\ref{mainth}.
We begin with a symmetrization lemma, analogous to Lemma~\ref{lem:symm}

\begin{lemma}\label{lem:symmC*}
The set $\Theta_{qc}^s$ (resp., $\Theta_q^s$), is equal to the set of pairs $(\theta,\thetat)$  with $0\le \theta\le 1$,
such that there exists a C$^*$-algebra $\mathcal{A}$ (resp., a finite dimensional C$^*$-algebra, $\Ac$)
with a faithful tracial state $\tau$ and with projections $A_0,A_1,A_2\in\Ac$ such that for all $v$, 
\begin{equation}\label{eq:tauAv}
\tau(A_v)=\theta,\qquad\tau(A_vA_{v+1})=\beta,
\end{equation}
where  $\thetat=\frac12+\theta-\beta$.
\end{lemma}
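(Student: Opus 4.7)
The plan is to adapt the argument of Lemma~\ref{lem:symm} to the C*-algebraic setting. By Theorem~\ref{qcsyn} together with the analogous finite-dimensional characterization of $C_q^s(n,m)$ and the derivation of~\eqref{eq-in-qc-case} in Section~\ref{delta-section}, we already know that $(\theta,\thetat)\in\Theta_{qc}^s$ (resp., $\Theta_q^s$) iff there exist projections $A_0,A_1,A_2$ in a unital C*-algebra (resp., finite-dimensional unital C*-algebra) $\Ac$ with a tracial state $\tau$ satisfying $\theta=\frac13\sum_v\tau(A_v)$ and $\beta=\frac13\sum_v\tau(A_vA_{v+1})$, where $\thetat=\frac12+\theta-\beta$. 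The additional content of Lemma~\ref{lem:symmC*} is to upgrade these averages to the per-$v$ equalities~\eqref{eq:tauAv} and to arrange that $\tau$ is faithful; the reverse direction of the lemma is then immediate, since the individual equalities trivially imply the averaged ones.

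For the symmetrization, I would run a cyclic direct-sum construction parallel to the Hilbert-space one in Lemma~\ref{lem:symm}. Given $(\Ac,\tau,A_0,A_1,A_2)$ realizing $(\theta,\thetat)$ in the averaged sense, pass to $\Ac^{\oplus 3}$ with the tracial state $\tilde\tau=\frac13(\tau\oplus\tau\oplus\tau)$ and set
\[
\tilde A_v = A_v\oplus A_{v+1}\oplus A_{v+2}\qquad(v=0,1,2,\text{ indices mod }3).
\]
Each $\tilde A_v$ is a projection in $\Ac^{\oplus 3}$, and a direct calculation gives $\tilde\tau(\tilde A_v)=\theta$ and $\tilde\tau(\tilde A_v\tilde A_{v+1})=\beta$ for every $v$, because $\sum_v\tau(A_v)$ and $\sum_v\tau(A_vA_{v+1})$ are both invariant under the cyclic shift $v\mapsto v+1$. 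If $\Ac$ is finite-dimensional, then so is $\Ac^{\oplus 3}$, so this step preserves both parallel cases.

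To force faithfulness, I would quotient by the trace kernel $I_{\tilde\tau}=\{a\in\Ac^{\oplus 3}:\tilde\tau(a^*a)=0\}$. The tracial Cauchy--Schwarz inequality (together with $\tilde\tau(aa^*)=\tilde\tau(a^*a)$) shows that $I_{\tilde\tau}$ is a closed two-sided $*$-ideal, so the quotient is again a unital C*-algebra and $\tilde\tau$ descends to a faithful tracial state. Quotient maps carry projections to projections and preserve trace values, so the images of the $\tilde A_v$ realize the same pair $(\theta,\thetat)$. In the finite-dimensional setting, $I_{\tilde\tau}$ is simply the direct sum of those Wedderburn summands of $\Ac^{\oplus 3}$ on which $\tilde\tau$ vanishes, so the quotient remains finite-dimensional.

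There is no serious obstacle in this argument: it is a routine combination of the cyclic symmetrization trick and a standard trace-kernel quotient. The only point requiring any care is keeping the finite-dimensional hypothesis intact for the $\Theta_q^s$ statement, which the Wedderburn decomposition makes transparent.
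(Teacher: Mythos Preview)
Your proof is correct and follows essentially the same route as the paper: both invoke Theorem~\ref{qcsyn} (and its finite-dimensional analogue) for the averaged characterization and then apply the cyclic direct-sum trick $\widetilde{\Ac}=\Ac^{\oplus 3}$, $\widetilde{A}_v=A_v\oplus A_{v+1}\oplus A_{v+2}$ to obtain the per-$v$ equalities. The only difference is that the paper asserts faithfulness of $\tau$ from the outset without comment, whereas you supply the explicit trace-kernel quotient argument; this is a welcome clarification rather than a different strategy.
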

\begin{proof}
By Theorem~\ref{qcsyn} and the discussion in Section~\ref{delta-section}, $(\theta,\thetat)$ belongs to $\Theta_{qc}^s$ (respectively, $\Theta_q^s$) if and only if there is a C$^*$-algebra $\Ac$ (respectively, a finite dimensional C$^*$-algebra $\Ac$), with a faithful tracial state $\tau$ and projections
$A_0,A_1,A_2$ such that 
\[
\frac13\sum_{v=0}^2\tau(A_v)=\theta,\qquad\frac13\sum_{v=0}^2\tau(A_vA_{v+1})=\beta,
\]
where $\thetat=\frac12+\theta-\beta$.
But if such exist, then we can consider the C$^*$-algebra $\widetilde{\Ac}=\Ac\oplus\Ac\oplus\Ac$ with the trace $\widetilde{\tau}=\frac{1}{3}\tau \oplus \frac{1}{3}\tau \oplus \frac{1}{3}\tau$, and projections $\widetilde{A}_v=A_v\oplus A_{v+1}\oplus A_{v+2}$ that satisfy the stronger requirements of the lemma that include~\eqref{eq:tauAv}.
\end{proof}

We now have some C$^*$-algebra results.

\begin{prop}\label{central-lemma}
Let $\mathcal{A}$ be a unital $C^*$-algebra with a faithful tracial state $\tau$.
Let $A$ and $P$ be hermitian elements in $\mathcal A$.
If $AP-PA\neq 0$, then there exists $H=H^*\in \mathcal{A}$ such that, letting $f(t)=\tau(A(e^{iHt}Pe^{-iHt}))$ for $t\in \RR$ , we have $f'(0)> 0$.
\end{prop}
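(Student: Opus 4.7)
The plan is to compute $f'(0)$ directly and then to choose $H$ so that the resulting expression is manifestly positive, using the faithfulness of $\tau$. Since $H$ commutes with $e^{iHt}$, differentiating gives $\frac{d}{dt}(e^{iHt}Pe^{-iHt}) = ie^{iHt}[H,P]e^{-iHt}$, so at $t=0$ one obtains
\begin{align*}
f'(0) = i\tau(A[H,P]) = i\tau(AHP - APH).
\end{align*}
Applying cyclicity of $\tau$ to rearrange so that $H$ appears on the right, this equals $-i\tau([A,P]H)$.

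The key structural observation is that the hypotheses force $[A,P]$ to be skew-hermitian: since $A^*=A$ and $P^*=P$, one has $[A,P]^* = (AP-PA)^* = PA - AP = -[A,P]$. Consequently $B := -i[A,P]$ is a hermitian element of $\Ac$, and $B \neq 0$ by the assumption that $AP-PA \neq 0$. In terms of $B$, the derivative becomes
\begin{align*}
f'(0) = \tau(BH).
\end{align*}

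With this in hand the natural choice is $H := B$, which is hermitian. Then $f'(0) = \tau(B^2) = \tau(B^*B)$, and the strict positivity $\tau(B^*B) > 0$ follows immediately from faithfulness of the tracial state applied to the nonzero positive element $B^*B$. The only subtlety in the argument is bookkeeping of the factors of $i$ and the cyclic rearrangement under the trace that transfers the commutator from $[H,P]$ to $[A,P]$; once this is handled, the hermitian square structure makes the strict inequality automatic, and no deeper $C^*$-algebraic machinery is required.
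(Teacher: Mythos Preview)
Your proof is correct and essentially identical to the paper's: both compute $f'(0)=i\tau((PA-AP)H)$ via the trace property and then take $H=i(PA-AP)$ (which is exactly your $B=-i[A,P]$), obtaining $f'(0)=\tau(|PA-AP|^2)>0$ by faithfulness. The only difference is presentational---you package the skew-hermitian commutator as the hermitian element $B$ before choosing $H$, whereas the paper makes the choice directly.
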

\begin{proof}
If $H\in \mathcal A$ is hermitian, then
\begin{align*}
f'(0) =  i\tau(AHP-APH) =i\tau((PA-AP)H),
\end{align*} where we used the fact that $\tau$ is a tracial state.
Supppose $AP-PA\neq 0$.
Let $H=i(PA-AP)$.
Then $H$ is hermitian and $f'(0)=\tau(|PA-AP|^2)>0$, where the strict inequality follows beacuse $AP-PA\neq 0$ and $\tau$ is a faithful state.
\end{proof}

% We don't actually use the following corollary, but its proof is an easier verion of the proof of Corollary~\ref{three-projections-lemma},
% which we will use.
 
% \begin{Co}\label{commuting-lemma}
% Let $\mathcal{A}$ be a unital $C^*$-algebra with a faithful tracial state $\tau$. Fix $A\geq 0\in \mathcal{A}$ and $\lambda\geq 0$. Define $m=\inf\{\tau(AP):P\geq 0, \tau(P)=\lambda\}$. If there exists $P\geq 0$ in $\mathcal{A}$ with $\tau(P)=\lambda$ such that $\tau(AP)=m$, then $AP=PA$.
% \end{Co}
 
% \begin{proof}
% We proceed by contrapositive. Suppose $AP\neq PA$. Then by Proposition \ref{central-lemma}, there exists $H=H^*$ in $\mathcal{A}$ such that if $\Gamma(t)=A(e^{iHt}Pe^{-iHt})$ and $f(t)=\tau(\Gamma(t))$, then $f'(0)>0$. If we let $P_t=e^{iHt}Pe^{-iHt}$, then clearly $\tau(P_t)=\lambda$.  Since $f'(0)>0$, taking a sufficiently small negative $t$, we get $f(t)<f(0)$ which means $\tau(AP_t)<\tau(AP_0)=\tau(AP)=m$, which implies that $m$ is not the infimum.
% \end{proof}

\begin{Co}\label{three-projections-lemma}
Let $\mathcal{A}$ be a unital $C^*$-algebra with a faithful tracial state $\tau$. Fix $\theta\in [0,1]$.
Let
\begin{multline*}
\beta = \inf \bigg\{\frac{1}{3}\tau\left(AB+BC+CA \right):A, B, C \in \cl A \text{ projections}, \\
\tau(A)=\tau(B)=\tau(C)=\theta \bigg\}.
\end{multline*}
If there exist projections $A_0, B_0, C_0$ in $\mathcal{A}$ such that $\tau(A_0)=\tau(B_0)=\tau(C_0)=\theta$
and $\beta=\frac{1}{3}\tau(A_0B_0+B_0C_0+C_0A_0)$, then
\[
[A_0,B_0+C_0]=[B_0,C_0+A_0]=[C_0,A_0+B_0]=0.
\]
\end{Co}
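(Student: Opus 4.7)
The strategy is to use the extremality of the triple $(A_0,B_0,C_0)$ together with Proposition \ref{central-lemma}, by perturbing one projection at a time via unitary conjugation and reading off the first-order optimality condition.

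Fix a hermitian $H\in\Ac$ and set $A_0(t)=e^{iHt}A_0 e^{-iHt}$. Since unitary conjugation preserves projections and the tracial value, $A_0(t)$ is a projection with $\tau(A_0(t))=\theta$, so $(A_0(t),B_0,C_0)$ is admissible in the infimum defining $\beta$. Using the trace property, the function
$$g(t)=\frac{1}{3}\tau\bigl(A_0(t)B_0+B_0C_0+C_0A_0(t)\bigr)=\frac{1}{3}\tau\bigl((B_0+C_0)A_0(t)\bigr)+\frac{1}{3}\tau(B_0C_0)$$
satisfies $g(t)\geq\beta=g(0)$ for every $t\in\RR$. Hence $t=0$ is a minimum of $g$, and $g'(0)=0$ for every choice of hermitian $H$.

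Now I would apply Proposition \ref{central-lemma} with $A:=B_0+C_0$ and $P:=A_0$ (both hermitian). If $[A_0,B_0+C_0]\neq 0$, the proposition produces a hermitian $H$ for which the derivative at $0$ of $\tau\bigl((B_0+C_0)e^{iHt}A_0 e^{-iHt}\bigr)$ is strictly positive, so $g'(0)>0$, contradicting the minimum. Therefore $[A_0,B_0+C_0]=0$.

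Because $\tau(AB+BC+CA)$ is invariant under every permutation of $(A,B,C)$ by cyclicity of $\tau$, repeating the argument with unitary perturbations of $B_0$ (resp.\ $C_0$) while holding the other two projections fixed yields the remaining identities $[B_0,C_0+A_0]=0$ and $[C_0,A_0+B_0]=0$. The only substantive step is the invocation of Proposition \ref{central-lemma}; the rest is routine bookkeeping to check that each perturbed triple remains admissible and that cyclicity of $\tau$ isolates the correct commutator at first order.
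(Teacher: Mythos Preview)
Your proof is correct and follows essentially the same approach as the paper's: perturb one side by a unitary conjugation, use Proposition~\ref{central-lemma} to extract the first-order optimality condition, and conclude by symmetry. The only cosmetic difference is that the paper conjugates $B_0$ and $C_0$ simultaneously by $e^{iHt}$ (so that $\tau(B_tC_t)=\tau(B_0C_0)$ is preserved) and applies Proposition~\ref{central-lemma} with $A:=A_0$, $P:=B_0+C_0$, whereas you conjugate $A_0$ and apply the proposition with the roles of $A$ and $P$ swapped; both variants yield the same commutator condition.
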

\begin{proof}
We will show that $A_0$ commutes with $B_0+C_0$ and the other commutation relations follow by symmetry.
Let $P=B_0+C_0$.
Suppose, for contradiction, that $[A_0,P]\neq 0$.
Then, by Proposition~\ref{central-lemma},
there exists $H=H^*\in \mathcal{A}$ such that if  $f(t)=\tau(A_0(e^{iHt}Pe^{-iHt}))$, then $f'(0)> 0$.
Fix some small and negative $t$ such that $f(t)<f(0)$.
Letting
$B_t=e^{iHt}B_0e^{-iHt}$ and $C_t=e^{iHt}C_0e^{-iHt}$, we see that $B_t$ and $C_t$ are themselves projections in $\mathcal{A}$ and $\tau(B_t)=\tau(C_t)=\theta$.
But then for our value of $t$,
\begin{align*}
\tau(A_0B_t+B_tC_t+C_tA_0) &= \tau(A_0(B_t+C_t)+B_tC_t) \\
&= \tau(A_0(e^{iHt}Pe^{-iHt}))+\tau((e^{iHt}B_0e^{-iHt})(e^{iHt}C_0e^{-iHt})) \\
&= f(t)+\tau(B_0C_0) \\
&< f(0)+\tau(B_0C_0)  = 3\beta,
\end{align*}
which implies that $\beta$ is not the infimum, contrary to hypothesis.
Thus, $A_0$ commutes with $B_0+C_0$.
\end{proof}

We now consider the universal unital C$^*$-algebra $\Afr$ generated by self-adjoint projections $A$, $B$, and $C$ satisfying the commutator relations
\begin{equation}\label{eq:commrels}
[A,B+C]=[B,A+C]=[C,A+B]=0.
\end{equation}
By definition, this is obtained by separation and completion of the universal unital complex algebra generated by noncommuting variables $A$, $B$ and $C$,
under the seminorm that is the supremum of seminorms obtained from Hilbert space representations whereby $A$, $B$ and $C$ are sent
to self-adjoint projections satisfying the above relations.

\begin{prop}\label{prop:univC*}
The universal C$^*$-algebra $\Afr$ described above
is isomorphic to $\CC^8\oplus\MM_2$, wherein
\begin{align*}
A&=0\oplus 0\oplus0\oplus0\oplus1\oplus1\oplus1\oplus1\oplus\left(\begin{matrix}1&0\\0&0\end{matrix}\right), \\
B&=0\oplus 0\oplus1\oplus1\oplus0\oplus0\oplus1\oplus1
 \oplus\left(\begin{matrix}\frac14&\frac{\sqrt3}4\\[1ex]\frac{\sqrt3}4&\frac34\end{matrix}\right), \\
C&=0\oplus 1\oplus0\oplus1\oplus0\oplus1\oplus0\oplus1
 \oplus\left(\begin{matrix}\frac14&-\frac{\sqrt3}4\\[1ex]-\frac{\sqrt3}4&\frac34\end{matrix}\right).
\end{align*}
\end{prop}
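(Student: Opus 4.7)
The plan is to reduce the defining relations to a single centrality condition, classify all irreducible $*$-representations of $\Afr$, and then assemble them to realize $\Afr$ as the asserted direct sum. First I would observe that the three commutator identities in~\eqref{eq:commrels} are jointly equivalent to the single requirement that $S := A + B + C$ lie in the center of $\Afr$: indeed $[A,S] = [A,B+C]$, and similarly for $B$ and $C$, so the three original relations say precisely that each generator commutes with $S$, which (since $A,B,C$ generate $\Afr$) is exactly the statement $S \in Z(\Afr)$.

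Next, in any irreducible representation $\pi$ of $\Afr$, the central element $S$ must act as a scalar $\lambda I$, so $\pi(C) = \lambda I - \pi(A) - \pi(B)$ already lies in the C$^*$-subalgebra generated by $\pi(A)$ and $\pi(B)$. Thus $\pi(\Afr)$ is generated by just two projections, which must therefore act irreducibly on the representation space. The classical Halmos structure theorem for a pair of orthogonal projections then forces the representation space to have dimension at most two.

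I would enumerate the resulting irreducible representations directly. In dimension one each of $A, B, C$ takes a value in $\{0,1\}$, giving the $2^3 = 8$ pairwise inequivalent characters, which furnish the $\CC^8$ summand. In dimension two, after a unitary change of basis I may take $\pi(A) = \bigl(\begin{smallmatrix} 1 & 0 \\ 0 & 0 \end{smallmatrix}\bigr)$ and $\pi(B) = \bigl(\begin{smallmatrix} c^2 & cs \\ cs & s^2 \end{smallmatrix}\bigr)$ with $c, s > 0$ and $c^2 + s^2 = 1$; then setting $\pi(C) = \lambda I - \pi(A) - \pi(B)$ and imposing $\pi(C)^2 = \pi(C)$, together with $\tr \pi(C) \in \{0,1,2\}$, forces $\lambda = 3/2$ and $c = \tfrac12$, $s = \tfrac{\sqrt{3}}{2}$. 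This recovers exactly the $\MM_2$ block in the proposition and shows it is the unique two-dimensional irreducible representation up to unitary equivalence.

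Finally, the direct sum of these nine inequivalent irreducible representations defines a $*$-homomorphism $\pi \colon \Afr \to \CC^8 \oplus \MM_2$. Injectivity is automatic since irreducible representations separate points of any C$^*$-algebra; surjectivity follows from pairwise inequivalence, for then the commutant of $\pi(\Afr)$ in $\CC^8 \oplus \MM_2$ collapses to the center of the target, and the double commutant theorem yields $\pi(\Afr) = \CC^8 \oplus \MM_2$. I expect the main obstacle to be the dimension-two enumeration, where one must carefully solve the projection constraints on $\pi(C)$ to isolate $\lambda = 3/2$; once that is done, the $\MM_2$ block drops out essentially uniquely.
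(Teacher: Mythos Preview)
Your argument is correct and, in a couple of places, cleaner than the paper's. The paper does not notice that $S=A+B+C$ is central; instead it shows that $Y:=B+C-BC-CB$ lies in the center, uses this (and its cyclic variants) to exhibit eight elements spanning $\pi(\Afr)$, and from $\dim\pi(\Afr)\le 8$ deduces $\dim\Hc_\pi\le2$. Your route---$S$ central, hence $\pi(C)=\lambda I-\pi(A)-\pi(B)$, hence $\pi(\Afr)$ is generated by two projections, and then Halmos---reaches the same dimension bound with less computation and with a structural explanation for why only two projections matter. In the two-dimensional analysis the paper pins down $\pi(C)$ by imposing $[\pi(A),\pi(B)+\pi(C)]=0$ and then $[\pi(C),\pi(A)+\pi(B)]=0$, whereas you solve $\pi(C)^2=\pi(C)$ directly on $\lambda I-\pi(A)-\pi(B)$; both lead to the unique solution $c=\tfrac12$, $\lambda=\tfrac32$. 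Your closing surjectivity/injectivity argument via the double commutant is fine (the paper simply asserts that a finite-dimensional C$^*$-algebra is the direct sum of the images of its inequivalent irreducibles). The one place to be slightly more explicit is to note that $\pi(A)$ and $\pi(B)$ must each have rank~$1$ in the two-dimensional case (else one of them is scalar and the pair generates a commutative algebra), before fixing the parametrization with $c,s>0$; this is implicit in your use of Halmos but worth saying.
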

\begin{proof}
We will describe all irreducible $*$-representations of $\Afr$ on Hilbert spaces.
Let
\[
Y=2(B+C)-(B+C)^2\in\Afr.
\]
By the commutation relations~\eqref{eq:commrels}, $Y$ commutes with $A$.
We also note that $Y=B+C-BC-CB$ and
\[
BY=B-BCB=YB,
\]
namely, that $Y$ commutes with $B$.
Similarly, $Y$ commutes with $C$.
Hence $Y$ lies in the center of $\Afr$.
Thus, under any irreducible $*$-representation $\pi$, $Y$ must be sent to a scalar multiple of the identity operator.
In other words, we have
\[
\pi(B+C-BC-CB)=\pi(Y)=\lambda\pi(1)
\]
for some $\lambda\in \CC$, so that
\[
\pi(CB)\in\lspan\pi\big(\{1,B,C,BC\}\big).
\]
Similarly, we have
\[
\pi(CA)\in\lspan\pi\big(\{1,A,C,AC\}\big),\qquad
\pi(BA)\in\lspan\pi\big(\{1,A,B,AB\}\big).
\]
Since $\Afr$ is densely spanned by the set of all words in the idempotents $A$, $B$ and $C$, we see
\[
\pi(\Afr)=\lspan\pi\big(\{1,A,B,C,AB,AC,BC,ABC\}\big).
\]
This implies that $\dim\pi(\Afr)\le 8$.
Since $\pi(\Afr)$ is finite dimensional and acts irreducibly on a Hilbert space $\Hc_\pi$, it must be equal to a full matrix algebra.
Considering dimensions, we must have $\dim\Hc_\pi\le 2$.

The irreducible representations $\pi$ of $\Afr$ for which $\dim\Hc_\pi=1$ are easy to describe.
They are the eight representations that send $A$, $B$ and $C$ variously to $0$ and $1$.
We will now characterize the irreducible representations $\pi$ of $\Afr$ for which $\dim\Hc_\pi=2$, up to unitary equivalence.
Let $\pi$ be such a representation.
From the commutation relations~\eqref{eq:commrels}, we see that, if $\pi(A)$ and $\pi(B)$ commute, then also $\pi(C)$ commutes with $\pi(A)$ and with $\pi(B)$,
and the entire algebra $\pi(\Afr)$ is commutative.
This would require $\dim\Hc_\pi=1$.
By symmetry we conclude that no two of $\pi(A)$, $\pi(B)$ and $\pi(C)$ can commute.
In particular, each must be a projection of rank $1$.
After conjugation with a unitary, we must have
\[
\pi(A)=\left(\begin{matrix}1&0\\0&0\end{matrix}\right),
\qquad\pi(B)=\left(\begin{matrix}t&\sqrt{t(1-t)}\\\sqrt{t(1-t)}&1-t\end{matrix}\right)
\]
for some $0<t<1$.
Since $\pi(B)+\pi(C)$ must commute with $\pi(A)$, we must have
\[
\pi(C)=\left(\begin{matrix}c_{11}&-\sqrt{t(1-t)}\\-\sqrt{t(1-t)}&c_{22}\end{matrix}\right),
\]
for some $c_{11},c_{22}\ge0$.
Since $\pi(C)$ is a projection, the only possible choices are (i) $c_{11}=t$ and $c_{22}=1-t$ and
(ii) $c_{11}=1-t$ and $c_{22}=t$.
But in Case (ii), we have $\pi(C)=I_{\Hc_\pi}-\pi(B)$, which violates the prohibition against $\pi(C)$ and $\pi(B)$ commuting.
Thus, we must have
\[
\pi(C)=\left(\begin{matrix}t&-\sqrt{t(1-t)}\\-\sqrt{t(1-t)}&1-t\end{matrix}\right).
\]
Now, using that $\pi(A)+\pi(B)$ and $\pi(C)$ commute, we see that we must have $t=\frac14$ and we easily check that this does provide an
irreducible representation of $\Afr$.

To summarize, up to unitary equivalence, there are exactly nine different irreducible representations of $\Afr$, one of them is two-dimensional and the others
are one-dimensional.
Thus, $\Afr$ is finite dimensional and is isomorphic to the direct sum of the images of its irreducible representations, namely
to $\CC^8\oplus\MM_2$,
with $A$, $B$ and $C$ as indicated.
\end{proof}

We now prove Theorem \ref{mainth} for the cases $t\in\{q,qa,qc\}$.

\begin{theorem}\label{mainthm-qc}
For $t\in\{q,qa,qc\}$, the functions
\begin{equation}\label{eq:fluvectdef-qc}
f_t^l(\theta)= \inf\{\widetilde{\theta}:(\theta,\widetilde{\theta})\in\Theta_t^s\},\qquad
f_t^u(\theta)=\sup\{\widetilde{\theta}:(\theta,\widetilde{\theta})\in\Theta_t^s\}
\end{equation}
are given by
\begin{equation}\label{eq:flutmain-qc}
f_t^l(\theta) = \frac{1}{2}, \qquad f_t^u(\theta) = \begin{cases} 
\hfill \frac{1}{2}+\theta \hfill & \text{ for } 0\leq \theta \leq \frac{1}{3} \\
\hfill \frac{3+\theta}{4} \hfill & \text{ for } \frac{1}{3}\leq \theta\leq \frac{1}{2} \\
\hfill \frac{4-\theta}{4} \hfill & \text{ for } \frac{1}{2} \leq \theta \leq \frac{2}{3} \\
\hfill \frac{3}{2}-\theta \hfill & \text{ for } \frac{2}{3}\leq \theta \leq 1.
\end{cases}
\end{equation}
Moreover, both the infimum and supremum are attained, for all values of $\theta\in[0,1]$.
\end{theorem}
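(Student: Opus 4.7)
The plan is to exploit the same reduction as in Section~\ref{delta-section}: by Lemma~\ref{lem:symmC*}, determining $f^l_t$ and $f^u_t$ for $t \in \{q,qa,qc\}$ amounts to finding the range of values of $\beta = \tfrac{1}{3}\sum_{v=0}^{2}\tau(A_vA_{v+1})$ over triples of projections $A_0,A_1,A_2$ in a (finite-dimensional, when $t=q$) C$^*$-algebra equipped with a faithful tracial state $\tau$ satisfying $\tau(A_v)=\theta$ for all $v$. Since $\widetilde{\theta}=\tfrac{1}{2}+\theta-\beta$, computing $f^l_t$ is equivalent to maximizing $\beta$, and computing $f^u_t$ to minimizing it.

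The identity $f^l_t(\theta)=\tfrac12$ is the easy direction. The tracial Cauchy--Schwarz inequality gives $\tau(A_vA_{v+1}) \le \sqrt{\tau(A_v^2)\tau(A_{v+1}^2)} = \theta$, and hence $\beta \le \theta$. Equality is realized in $\Theta^s_q$ by taking $A_0=A_1=A_2$ equal to a projection of trace $\theta$, for instance inside $\CC\oplus\CC$ equipped with the weighted tracial state $(\theta,1-\theta)$, which works for every $\theta\in[0,1]$.

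For $f^u_t$, the central step is to establish that the infimum is attained and structurally constrained. Since $C^s_{qc}(3,2)\subset \RR^{36}$ is closed and bounded, the affine image $\Theta^s_{qc}$ is compact, so for each $\theta$ I may fix a minimizing triple $(A_0,A_1,A_2)$ in some C$^*$-algebra $\Ac$ with tracial state $\tau$; passing to the GNS representation if necessary, $\tau$ may be assumed faithful. Applying Corollary~\ref{three-projections-lemma} to this minimizer forces the commutation relations $[A_v,A_{v+1}+A_{v+2}]=0$ for $v=0,1,2$, so by the universal property there is a $\ast$-homomorphism $\pi:\Afr\to\Ac$ sending the universal generators to $(A_0,A_1,A_2)$, and $\tau\circ\pi$ is a tracial state on $\Afr\cong \CC^8\oplus \MM_2$ (Proposition~\ref{prop:univC*}). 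Tracial states on $\CC^8\oplus \MM_2$ are parametrized by a probability vector $(\alpha_0,\ldots,\alpha_8)$, and the explicit matrix entries in Proposition~\ref{prop:univC*} make the constraints $\tau(A_v)=\theta$ and the objective $\beta$ into linear functions of the $\alpha_i$; minimization of $\beta$ thus becomes a routine linear program in nine nonnegative variables with three equality constraints.

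Solving this LP case-by-case reproduces the piecewise formula for $f^u_t$: on $[0,\tfrac13]$ the minimum $\beta=0$ is attained by distributing mass among the three one-dimensional summands indexed by bit-strings of Hamming weight one (together with the all-zero summand to absorb any remainder); on $[\tfrac13,\tfrac12]$ one interpolates these with the $\MM_2$ summand, which alone realizes $(\theta,\beta)=(\tfrac12,\tfrac18)$; on $[\tfrac12,\tfrac23]$ one uses the symmetric construction obtained via $A_v\mapsto 1-A_v$; and on $[\tfrac23,1]$ one uses the three one-dimensional summands with bit-strings of Hamming weight two together with the all-one summand. Since every optimal tracial state lives on the finite-dimensional algebra $\CC^8\oplus\MM_2$, the corresponding correlations lie in $C^s_q(3,2)$, simultaneously proving $f^u_q=f^u_{qa}=f^u_{qc}$ and that the infimum and supremum are attained at every $\theta\in[0,1]$. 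The main technical obstacle is the LP itself: verifying that the proposed traces are genuinely optimal on each of the four subintervals and that no other support configuration among the nine irreducible summands does better.
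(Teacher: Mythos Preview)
Your proposal is correct and follows essentially the same route as the paper: reduce via Lemma~\ref{lem:symmC*}, handle $f^l_t$ by Cauchy--Schwarz plus an explicit two-dimensional example, and for $f^u_t$ use compactness of $\Theta^s_{qc}$ together with Corollary~\ref{three-projections-lemma} to force the commutator relations, then push everything through Proposition~\ref{prop:univC*} to a linear program over tracial states on $\CC^8\oplus\MM_2$. Your explicit identification of which irreducible summands carry the optimal trace on each subinterval (and the use of the $A_v\mapsto 1-A_v$ symmetry for $[\tfrac12,\tfrac23]$) is a helpful addition that the paper leaves implicit, but the underlying argument is the same.
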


\begin{proof}
Fix $\theta\in[0,1]$.
From the inclusions~\eqref{eq:Thetaincl}, we conclude
\[
f_{qc}^l(\theta)\le f_{qa}^l(\theta)\le f_q^l(\theta)\le f_q^u(\theta)\le f_{qa}^u(\theta)\le f_{qc}^u(\theta).
\]
To find $f^l_{qc}(\theta)$, by Lemma~\ref{lem:symmC*}, we should find the supremum of values $\beta$ such that there exists a C$^*$-algebra $\Ac$
with faithful tracial state $\tau$ and 
with projections $A_0,A_1,A_2$ such that 
\begin{equation}\label{eq:vA}
\forall v,\quad\tau(A_v)=\theta,\qquad\tau(A_vA_{v+1})=\beta.
\end{equation}
By Cauchy-Schwarz, $\beta\le\theta$.
But taking $\Ac=\CC\oplus\CC$ with $A_v=1\oplus 0$ and an appropriate trace $\tau$ shows that $\beta=\theta$ occurs, and in a finite dimensional example.
Thus, we find $f_{qc}^l(\theta)=f_q^l(\theta)=\frac12$.

To find $f^u_{qc}(\theta)$,
again using Lenma~\ref{lem:symmC*}, we should find the infimum $\beta_0$ of values $\beta$ as described above.
Since $\Theta^s_{qc}$ is closed, this infimum is attained.
Thus, there exists a C$^*$-algebra $\Ac$ with tracial state $\tau$ and projections $A_0,A_1,A_2$ such that~\eqref{eq:vA} holds with $\beta=\beta_0$.
Morover, by the proof of Lemma~\ref{lem:symmC*},
we have that $\beta_0$ equals the infimum of $\frac{1}{3}\tau(AB+BC+CA)$ over all projections $A,B,C$ in some C$^*$-algebra with faithful tracial state
$\tau$ such that $\tau(A)=\tau(B)=\tau(C)=\theta$.
Thus, Corollary~\ref{three-projections-lemma} applies and the commutation relations
\[
[A_0,A_1+A_2]=[A_1,A_0+A_2]=[A_2,A_0+A_1]=0
\]
hold.
Thus, there is a representation of the universal C$^*$-algebra $\Afr$ considered in Proposition~\ref{prop:univC*}, sending $A$ to $A_0$, $B$ to $A_1$
and $C $ to $A_2$.
So, using Gelfand--Naimark--Segal representations, in order to find $\beta_0$, it suffices to consider tracial states (faithful or not) on $\Afr$.
In particular, $\beta_0$ is the minimum of all values of $\beta\ge0$ for which there exists a tracial state $\tau$ on $\Afr$ satisfying
\begin{equation}\label{eq:tauABC}
\tau(A)=\tau(B)=\tau(C)=\theta,\quad\tau(AB)=\tau(AC)=\tau(BC)=\beta.
\end{equation}
Since $\Afr$ is finite dimensional, we get $f^u_{qc}(\theta)=f^u_q(\theta)$.

An arbitrary tracial state of $\Afr$ is of the form
\[
\tau\left(\lambda_1\oplus\cdots\oplus\lambda_8\oplus\left(\begin{matrix}x_{11}&x_{12}\\x_{21}&x_{22}\end{matrix}\right)\right)
=\left(\sum_{j=1}^8t_j\lambda_j\right)+\frac s2(x_{11}+x_{22}),
\]
for some $t_1,\ldots,t_8,s\ge0$ satisfying $t_1+\cdots+t_8+s=1$.
The conditions~\eqref{eq:tauABC} become
\begin{gather*}
t_5+t_6+t_7+t_8+\frac s2=t_3+t_4+t_7+t_8+\frac s2=t_2+t_4+t_6+t_8+\frac s2=\theta, \\
t_7+t_8+\frac s8=t_6+t_8+\frac s8=t_4+t_8+\frac s8=\beta.
\end{gather*}
These are equivalent to
\begin{align*}
t_1&=1+3\beta-3\theta+\frac s8-t_8 \\
t_2=t_3=t_5&=\theta-2\beta-\frac s4+t_8 \\
t_4=t_6=t_7&=\beta-\frac s8-t_8.
\end{align*}
Thus, writing $t=t_8$, $\beta_0$ is the minimum value of $\beta$ such that there exist $s,t\ge0$ such that the inequalities
\[
1+3\beta-3\theta+\frac s8-t\ge0,\qquad\theta-2\beta-\frac s4+t\ge0,\qquad\beta-\frac s8-t\ge0.
\]
hold.
This is a linear programming problem.
We solved it by hand using the simplex method and also (to check) by using the Mathematica software platform~\cite{mma}.
The solution is,
\[
\beta_0=\begin{cases}
0,&0\le\theta\le\frac13 \\
\frac{3\theta-1}4,&\frac13\le\theta\le\frac12 \\
\frac{5\theta-2}4,&\frac12\le\theta\le\frac23 \\
2\theta-1,&\frac23\le\theta\le1,
\end{cases}
\]
which,
using $f^u_{qc}(\theta)=\frac12+\theta-\beta_0$, yields the values given in~\eqref{eq:flutmain-qc}.
\end{proof}

\section*{\it Acknowledgements} The authors would like to thank Tobias Fritz for several useful remarks.

\end{document}